\newtheorem{thm}{Theorem}[section]
\newtheorem{lem}[thm]{Lemma}
\theoremstyle{definition}
\newtheorem{defn}[thm]{Definition}
\newtheorem{example}[thm]{Example}
\theoremstyle{remark}
\newtheorem{rem}[thm]{Remark}
\numberwithin{equation}{section}
\begin{document}
\title[Entire and Analytical Solutions of Certain Classes...]{Entire and Analytical Solutions of Certain Classes of Abstract Degenerate Fractional Differential Equations and Their Systems$^{\ast}$}

\author{Marko Kosti\' c}
\address{Faculty of Technical Sciences,
University of Novi Sad,
Trg D. Obradovi\' ca 6, 21125 Novi Sad, Serbia}
\email{marco.s@verat.net}


{\renewcommand{\thefootnote}{} \footnote{
$^{\ast}$The text was submitted by the author for the English version of the journal.
\\ \text{  }  \ \    2010 {\it Mathematics
Subject Classification.} 47D06, 47D60,
47D62, 47D99.
\\ \text{  }  \ \    {\it Key words and phrases.} Abstract degenerate differential equations, Volterra integro-differential equations, fractional differential equations, entire and analytical solutions, well-posedness.
}}

\begin{abstract}
In this paper, we are primarily concerned with the study of entire and analytical solutions of abstract degenerate (multi-term) 
fractional differential equations with Caputo time-fractional derivatives. We also analyze systems of such equations and furnish several illustrative
examples to demonstrate usage of obtained theoretical results.
\end{abstract}
\maketitle

\section{INTRODUCTION AND PRELIMINARIES}

Fractional calculus has gained considerable popularity and importance during the past four decades, mainly due to its applications in diverse fields of science and engineering. 
Fairly complete information about fractional calculus and non-degenerate fractional
differential equations can be obtained
by consulting the references
\cite{bajlekova}, \cite{Diet}, \cite{kilbas}-\cite{knjigaho} and
\cite{Po}-\cite{samko}. 
Various types of abstract degenerate Volterra integro-differential equations and abstract degenerate (multi-term) fractional differential equations have been recently considered in \cite{fedorov}-\cite{vlad-mar} and \cite{filomat}-\cite{R-L-bilten} 
(cf. \cite{FK} for a comprehensive survey of results, as well as \cite{aliev1}, 
\cite{carol}, \cite{dem}, \cite{faviniyagi}, \cite{me152}, \cite{svir-fedorov}-\cite{svir3} and \cite{XL}-\cite{XL-HIGHER} for some other papers concerning the
abstract degenerate differential equations). 

It is well known that
the study of entire solutions of abstract differential equations was initiated by R. deLaubenfels \cite{l1-forum} in 1991. Concerning the theory
of abstract differential equations with integer order derivatives,
further contributions 
have been obtained by L. Autret \cite{auter0}, L. Autret-H. A. Emamirad \cite{auter}, T.-J. Xiao-J. Liang \cite{XL-entire}, Y. Mishura-Y. Tomilov \cite{mistom}, and the author \cite{knjigah}, \cite{sic}. The existence and uniqueness of entire and analytical solutions of the abstract non-degenerate time-fractional differential equations with Caputo derivatives have been investigated in \cite{fcaa}-\cite{systemss}. 
In a joint research paper with V. Fedorov \cite{vlad-mar-prim}, the author has recently considered a class of abstract degenerate multi-term fractional differential equations in locally convex spaces, pointing out that the methods proposed in \cite{XL-entire} (cf. also \cite[Section 4.4, pp. 167-175]{x263}), \cite{fcaa} and \cite[Remark 2.2(x)-(xi)]{vlad-mar-prim} can serve one to prove some results on the
existence and uniqueness of entire solutions of degenerate multi-term differential equations
with integer order derivatives (cf. \cite[Chapter 4]{svir-fedorov} for some basic results on the entire groups of solving operators for abstract degenerate differential equations of first order). Motivated primarily by this fact, in the second section of paper we consider the
existence and uniqueness of entire and analytical solutions to (systems) of degenerate multi-term fractional differential equations
with Caputo derivatives. It should also be noticed that in Subsection 2.1
we initiate the analysis of existence and uniqueness of entire and analytical solutions of some very important degenerate equations of mathematical physics in $L^{p}$ type spaces. 

We use the standard notation throughout the paper. 
Unless specifed otherwise,
we assume
that $X$ is a Hausdorff sequentially complete
locally convex space over the field of complex numbers. We use the shorthand SCLCS to denote such a space.
By
$L(X)$ we denote the space consisting of all continuous linear mappings from $X$ into
$X.$ By $\circledast$ we denote the fundamental systems of seminorms which defines the topology of $X.$
The Hausdorff locally convex topology on
$L(X)$ is defined in the usual way (see \cite[Section 1.1]{knjigaho}).
Let us recall that the space $L(X)$ is sequentially
complete provided that $X$ is barreled\index{barreled space} (\cite{meise}). 
If $A$ is a linear operator
acting on $X$,
then the domain, kernel space and range of $A$ will be denoted by
$D(A),$ $N(A)$ and $R(A),$
respectively. Since no confusion
seems likely, we will identify $A$ with its graph. The symbol $I$ stands for the identity operator on $X.$
If $C\in L(X)$ and $A$ is a closed linear operator acting on $X$, then we define the
$C$-resolvent set of $A,$
$\rho_{C}(A)$ for short, by $
\rho_{C}(A):=\{\lambda \in {\mathbb C}  \ | \ \lambda -A \mbox{ is
injective and } (\lambda-A)^{-1}C\in L(X)\};
$ $\rho(A)\equiv \rho_{I}(A).$ 
 If $V$ is a general topological vector space,
then a function $f :
\Omega \rightarrow V,$ where $\Omega$ is an open non-empty subset of ${\mathbb
C},$ is said to be analytic iff it is locally expressible in a
neighborhood of any point $z\in \Omega$ by a uniformly convergent
power series with coefficients in $V.$
We refer the reader to \cite{a43} and \cite[Section 1.1]{knjigaho} and references cited there for the basic information about vector-valued analytic functions. In our approach the space $X$ is sequentially complete, so that the analyticity of a mapping
$f: \Omega \rightarrow X$ is equivalent with its weak analyticity.

By ${\mathcal F}$ and ${\mathcal F}^{-1}$ we denote the Fourier transform on ${\mathbb R}^{n}$ and its inverse transform, respectively.
Given $\theta\in(0,\pi]$
in advance, define $\Sigma_{\theta}:=\{\lambda\in {\mathbb C}:\lambda\neq 0$, $|\arg(\lambda)|<\theta\}.$
Further on, $\lfloor\beta\rfloor:=\sup\{k\in {\mathbb Z}:\allowbreak k\leq\beta\},$ 
$\lceil\beta\rceil :=\inf\{k\in {\mathbb Z}:\beta\leq k\}$ ($\beta \in {\mathbb R}$), ${\mathbb N}_{n}:=\{1,\cdot \cdot \cdot,n\}$ and ${\mathbb N}_{n}^{0}:={\mathbb N}_{n} \cup \{0\}$ ($n\in {\mathbb N}$). 
The Gamma function is denoted by $\Gamma(\cdot)$ and the principal branch is always used to take
the powers; the convolution like
mapping $\ast$ is given by $f\ast g(t):=\int_{0}^{t}f(t-s)g(s)\,
ds .$ Set $g_{\zeta}(t):=t^{\zeta-1}/\Gamma(\zeta),$
$0^{\zeta}:=0$ ($\zeta>0,$ $t>0$), and
$g_{0}(t):=$ the Dirac $\delta$-distribution.
For a number $\zeta>0$ given in advance,
the Caputo fractional derivative ${\mathbf
D}_{t}^{\zeta}u$ (\cite{bajlekova}, \cite{knjigaho}) is defined
for those functions $u\in C^{\lceil \zeta \rceil-1}([0,\infty) : X)$ for which
$g_{\lceil \zeta \rceil-\zeta} \ast (u-\sum_{j=0}^{\lceil \zeta \rceil-1}u^{(j)}(0)g_{j+1}) \in
C^{\lceil \zeta \rceil}([0,\infty) : X),$ by
$$
{\mathbf
D}_{t}^{\zeta}u(t):=\frac{d^{\lceil \zeta \rceil}}{dt^{\lceil \zeta \rceil}}\Biggl[g_{\lceil \zeta \rceil-\zeta}
\ast \Biggl(u-\sum_{j=0}^{\lceil \zeta \rceil-1}u^{(j)}(0)g_{j+1}\Biggr)\Biggr].
$$
The
Mittag-Leffler function $E_{\beta,\gamma}(z)$ ($\beta>0,$ $\gamma \in {\mathbb R}$) is defined by
$$
E_{\beta,\gamma}(z):=\sum_{k=0}^{\infty}\frac{z^{k}}{\Gamma(\beta
k+\gamma)},\quad z\in {\mathbb C}.
$$
In this place, we assume that
$1/\Gamma(\beta k+\gamma)=0$ if $\beta k+\gamma \in -{{\mathbb
N}_{0}}.$ Set, for short, $E_{\beta}(z):=E_{\beta,1}(z),$ $z\in
{\mathbb C}.$
The asymptotic behaviour of the entire function $E_{\beta,\gamma}(z)$
is given in the following auxiliary lemma (see e.g. \cite[Section 1.3]{knjigaho}): 

\begin{lem}\label{1.1}
Let $0<\sigma<\frac{1}{2}\pi .$ Then, for every $z\in {\mathbb C}
\setminus \{0\}$ and $l\in {\mathbb N} \setminus \{1\},$ 
$$
E_{\beta,\gamma}(z)=\frac{1}{\beta}\sum
\limits_{s}Z_{s}^{1-\gamma}e^{Z_{s}}-\sum
\limits^{l-1}_{j=1}\frac{z^{-j}}{\Gamma(\gamma-\beta
j)}+O\bigl(|z|^{-l}\bigr),\quad |z|\rightarrow \infty ,
$$
where $Z_{s}$ is defined by $Z_{s}:=z^{1/\beta}e^{2\pi i s/\beta}$
and the first summation is taken over all those integers $s$
satisfying $|\arg (z) + 2\pi s|<\beta(\frac{\pi}{2}+\sigma).$
\end{lem}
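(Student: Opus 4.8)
The plan is to represent $E_{\beta,\gamma}(z)$ by a Hankel-type contour integral and then move the contour, collecting residues. Recall Hankel's formula $\Gamma(s)^{-1}=(2\pi i)^{-1}\int_{\mathcal H}e^{\mu}\mu^{-s}\,d\mu$, valid for every $s\in{\mathbb C}$ (with the convention $1/\Gamma(s)=0$ for $s\in-{\mathbb N}_{0}$), where $\mathcal H$ is any loop coming in from $-\infty$ below the cut $(-\infty,0]$, encircling the origin, and returning to $-\infty$ above it. Putting $s=\beta k+\gamma$, multiplying by $z^{k}$, summing over $k$, and choosing $\mathcal H$ to lie outside the circle $|\mu|=|z|^{1/\beta}$ so that $|z\mu^{-\beta}|$ stays bounded away from $1$ on $\mathcal H$, term-by-term integration gives
$$
E_{\beta,\gamma}(z)=\frac{1}{2\pi i}\int_{\mathcal H}\frac{\mu^{\beta-\gamma}e^{\mu}}{\mu^{\beta}-z}\,d\mu ,\qquad z\in{\mathbb C}\setminus\{0\}.
$$

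Next I would pull $\mathcal H$ inward onto the contour $\Gamma_{\psi}$ made up of the two rays $\arg\mu=\pm\psi$, with $\psi:=\tfrac{\pi}{2}+\sigma$, closed off by the arc $|\mu|=\eps$; note that $\tfrac{\pi}{2}<\psi<\pi$ by the hypothesis on $\sigma$, so $\RE\mu\to-\infty$ and the integrand decays on the rays. In this deformation one crosses precisely the simple poles $\mu=Z_{s}$ for which $|\arg Z_{s}|<\psi$, i.e. $|\arg(z)+2\pi s|<\beta(\tfrac{\pi}{2}+\sigma)$; since $\frac{d}{d\mu}(\mu^{\beta}-z)\big|_{Z_{s}}=\beta Z_{s}^{\beta-1}$, the residue of the integrand at $Z_{s}$ equals $Z_{s}^{1-\gamma}e^{Z_{s}}/\beta$, and these residues assemble into the first sum. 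Thus
$$
E_{\beta,\gamma}(z)=\frac{1}{\beta}\sum_{s}Z_{s}^{1-\gamma}e^{Z_{s}}+\frac{1}{2\pi i}\int_{\Gamma_{\psi}}\frac{\mu^{\beta-\gamma}e^{\mu}}{\mu^{\beta}-z}\,d\mu .
$$

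To extract the algebraic part from the leftover integral I would use on all of $\Gamma_{\psi}$ the purely algebraic identity
$$
\frac{1}{\mu^{\beta}-z}=-\sum_{j=0}^{l-1}\frac{\mu^{\beta j}}{z^{j+1}}+\frac{\mu^{\beta l}}{z^{l}\,(\mu^{\beta}-z)} .
$$
Integrating the polynomial terms against $(2\pi i)^{-1}\mu^{\beta-\gamma}e^{\mu}\,d\mu$ and invoking Hankel's formula once more (the integrals converge because of the exponential decay of $e^{\mu}$ on the rays) produces $-\sum_{m=1}^{l}z^{-m}/\Gamma(\gamma-\beta m)$, of which the terms $m=1,\dots,l-1$ are exactly the stated algebraic sum and the term $m=l$ is $O(|z|^{-l})$. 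The remaining contribution is $z^{-l}(2\pi i)^{-1}\int_{\Gamma_{\psi}}\mu^{\beta(l+1)-\gamma}e^{\mu}(\mu^{\beta}-z)^{-1}\,d\mu$; using the lower bound $|\mu^{\beta}-z|\ge c\,|\mu|^{\beta}$ valid on $\Gamma_{\psi}$ when $|z|$ is large, the integrand is dominated by $c^{-1}|\mu|^{\beta l-\RE\gamma}e^{\mu}$, whose integral over $\Gamma_{\psi}$ is a finite constant independent of $z$; hence this term is $O(|z|^{-l})$ and the expansion follows.

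The step I expect to be the genuine obstacle is the contour bookkeeping. One must check that $\mathcal H$ can be taken outside $|\mu|=|z|^{1/\beta}$ and deformed onto $\Gamma_{\psi}$ within ${\mathbb C}\setminus(-\infty,0]$, that this deformation crosses exactly the poles singled out by $|\arg(z)+2\pi s|<\beta(\tfrac{\pi}{2}+\sigma)$ (the borderline case of equality being excluded, or handled by letting $\sigma$ vary), and — for the lower bound $|\mu^{\beta}-z|\ge c|\mu|^{\beta}$ — that no $Z_{s}$ lies on $\Gamma_{\psi}$, which amounts to keeping $\beta\arg(z)\pm\beta\psi$ away from $2\pi{\mathbb Z}$, i.e. $\arg(z)$ in a fixed closed subsector; the asymptotic is then uniform there. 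For $\beta\ge2$, where $\Gamma_{\psi}$ cannot simultaneously capture the right poles and keep $\RE\mu\to-\infty$ on its tails, one first reduces to the range $0<\beta<2$ by means of the multiplication identity $E_{\beta,\gamma}(z)=\tfrac1m\sum_{h=0}^{m-1}E_{\beta/m,\gamma}\!\bigl(z^{1/m}e^{2\pi i h/m}\bigr)$, $m\in{\mathbb N}$.
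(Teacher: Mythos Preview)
The paper does not actually prove this lemma: it is stated as an auxiliary result with the parenthetical reference ``(see e.g.\ \cite[Section 1.3]{knjigaho})'' and is then used as a black box in the proof of Theorem~\ref{GZA}. So there is no in-paper proof to compare against.

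That said, your approach is the classical one (Hankel integral representation of $1/\Gamma$, summation to get the contour integral for $E_{\beta,\gamma}$, deformation onto two rays at angle $\pm(\tfrac{\pi}{2}+\sigma)$, residue extraction of the exponential terms, geometric expansion of $(\mu^{\beta}-z)^{-1}$ for the algebraic tail, and reduction to $0<\beta<2$ via the multiplication formula). This is exactly the argument found in the standard references (Bateman Manuscript Project, Dzhrbashyan, Podlubny, and the author's own monograph cited above), so in effect you have reproduced what the paper defers to. One small point: your lower bound $|\mu^{\beta}-z|\ge c|\mu|^{\beta}$ is not the right inequality near the small arc $|\mu|=\eps$ (there $|\mu^{\beta}-z|\sim|z|$, not $|\mu|^{\beta}$); the usual fix is to use $|\mu^{\beta}-z|\ge c\max(|\mu|^{\beta},|z|)$ on $\Gamma_{\psi}$ once $|z|$ is large and $\arg z$ is confined to a closed subsector avoiding the borderline, which you already flag. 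With that adjustment the remainder estimate goes through.
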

For further information about the Mittag-Leffler functions and the abstract Volterra integro-differential equations in Banach and locally convex spaces, the reader may consult \cite{bajlekova}, \cite{prus}, \cite{knjigaho} and references cited there.

Assume 
that
$n\in {\mathbb N}$ and $iA_{j},\ 1\leq j\leq n$ are commuting
generators of bounded $C_{0}$-groups on a Banach space $X.$ Set $A:=(A_{1},\cdot \cdot \cdot,A_{n})$ and
$A^{\eta}:=A_{1}^{\eta_{1}}\cdot \cdot \cdot A_{n}^{\eta_{n}}$ for
any $\eta=(\eta_{1},\cdot \cdot \cdot, \eta_{n})\in {{\mathbb
N}_{0}^{n}};$ 
denote by ${\mathcal D}({{\mathbb R}^{n}})$
and ${\mathcal S}({{\mathbb R}^{n}})$ the Schwartz space of $C^{\infty}({\mathbb R}^{n})$-functions with compact support and 
the Schwartz space of rapidly decreasing functions on ${{\mathbb R}^{n}},$ respectively (in the sequel, the meaning of symbol $A$ will be clear from the context). Let $k=1+\lfloor
n/2\rfloor.$ For every $\xi=(\xi_{1},\cdot \cdot \cdot, \xi_{n}) \in
{{\mathbb R}^{n}}$ and $u\in {\mathcal F}L^{1}({\mathbb R}^{n})= \{
{\mathcal F}f : f \in L^{1}({{\mathbb R}^{n}}) \},$ we set
$|\xi|:=(\sum_{j=1}^{n}\xi_{j}^{2})^{1/2},$
$(\xi,A):=\sum_{j=1}^{n}\xi_{j}A_{j}$ and
$
u(A)x:=\int_{{\mathbb R}^{n}}{\mathcal
F}^{-1}u(\xi)e^{-i(\xi,A)}x\, d\xi,\ x\in X.
$
Then $u(A)\in
L(X),$ $u\in {\mathcal F}L^{1}({{\mathbb R}^{n}})$ and there exists a finite constant
$M\geq 1$ such that 
$
\|u(A)\|\leq M \|{\mathcal
F}^{-1}u\|_{L^{1}({{\mathbb R}^{n}})},\ u\in {\mathcal
F}L^{1}({\mathbb R}^{n}).
$
Let $N\in {\mathbb N},$ and let
$p(x)=\sum_{|\eta|\leq N}a_{\eta}x^{\eta},$ $x\in {\mathbb R}^{n}$ be a complex polynomial. Then we define
$
p(A):=\sum_{|\eta|\leq N}a_{\eta}A^{\eta}\mbox{ and }
X_{0}:=\bigl\{\phi(A)x : \phi \in {\mathcal S}({{\mathbb R}^{n}}),\ x\in
X\bigr\}.
$
We know that the operator $p(A)$ is
closable and that the following holds:
\begin{itemize}
\item[($\triangleright$)]
$\overline{X_{0}}=X,$ $X_{0}\subseteq \bigcap _{\eta \in {{\mathbb
N}_{0}^{n}}}D(A^{\eta}),$ $\overline{p(A)_{|X_{0}}}=\overline{p(A)}$
and\\ $\phi(A)p(A)\subseteq p(A)\phi(A)=(\phi p)(A),$ $\phi \in
{\mathcal S}({{\mathbb R}^{n}}).$
\end{itemize}
Denote by ${\mathbb C}^{m,m}$ the ring of $m \times m$ matrices over ${\mathbb C}$; $I_{m}$ stands for the identity matrix of format $m\times m$ ($m\in {\mathbb N}$). If 
$P(x)=[p_{ij}(x)]$ is an $m\times m$ matrix of polynomials of $x\in {\mathbb R}^{n},$ then 
there exist $d\in {\mathbb N}$ and matrices $P_{\eta}\in {\mathbb C}^{m,m}$ such that $P(x)=\sum_{|\eta|\leq d}P_{\eta}x^{\eta},$ $x\in {\mathbb R}^{n}.$ Then the operator $P(A):=\sum_{|\eta|\leq d}P_{\eta}A^{\eta}
$ is closable on $X^{m}.$
For further information concerning the functional calculus for commuting generators of bounded $C_{0}$-groups, see \cite{l1}, \cite{knjigaho} and \cite{zheng-pacific}-\cite{quan-miao}.

The proof of following auxiliary lemma, which is probably known in the existing literature, is included for the sake of completeness.

\begin{lem}\label{lap-inj}
Suppose that $1\leq p<\infty,$ $n\in {\mathbb N}$ and $X:=L^{p}({\mathbb R}^{n}).$ Denote by $\Delta_{p,n}$ the operator $\Delta$ acting on $X$ with its maximal distributional domain. Then  $\Delta_{p,n}$ is injective.
\end{lem}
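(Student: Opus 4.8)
The plan is to work on the Fourier side, in the space $\mathcal{S}'({\mathbb R}^{n})$ of tempered distributions. Let $u$ belong to the (maximal distributional) domain of $\Delta_{p,n}$ with $\Delta_{p,n}u=0$; we must show $u=0$. Since $1\leq p<\infty$, the function $u\in L^{p}({\mathbb R}^{n})$ is a tempered distribution, and the hypothesis says precisely that $\Delta u=0$ holds in $\mathcal{S}'({\mathbb R}^{n})$. Applying the Fourier transform and the relation $\widehat{\Delta u}=-|\xi|^{2}\widehat{u}$, we obtain $|\xi|^{2}\widehat{u}=0$ in $\mathcal{S}'({\mathbb R}^{n})$, whence $\widehat{u}$ annihilates every test function supported away from the origin; that is, $\operatorname{supp}\widehat{u}\subseteq\{0\}$.

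The next step is to invoke the classical structure theorem for distributions concentrated at a single point: any such distribution is a finite linear combination of the Dirac distribution at $0$ and its partial derivatives. Transforming back, this tells us that $u$ coincides almost everywhere with a polynomial $P\colon{\mathbb R}^{n}\to{\mathbb C}$. It then remains to observe that a nonzero polynomial cannot lie in $L^{p}({\mathbb R}^{n})$ when $p<\infty$: writing $P$ in powers of the first variable, $P(x_{1},x')=\sum_{k=0}^{d_{1}}c_{k}(x')x_{1}^{k}$ with $c_{d_{1}}\not\equiv 0$, choosing $x'_{0}$ with $c_{d_{1}}(x'_{0})\neq 0$, and using continuity of $c_{d_{1}}$ to get $c_{d_{1}}\neq 0$ on a neighbourhood $U$ of $x'_{0}$, we find that for each fixed $x'\in U$ the map $x_{1}\mapsto P(x_{1},x')$ is a nonzero one-variable polynomial, so $\int_{{\mathbb R}}|P(x_{1},x')|^{p}\,dx_{1}=\infty$; by Tonelli's theorem $\int_{{\mathbb R}^{n}}|P|^{p}=\infty$, contradicting $u\in L^{p}$. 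Hence $P\equiv 0$ and $u=0$, establishing injectivity of $\Delta_{p,n}$.

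I do not anticipate a genuine obstacle here; everything rests on standard facts. The one point that may deserve a word of justification is the use of the point-support structure theorem, which can be circumvented entirely: by Weyl's lemma the distributional identity $\Delta u=0$ forces $u$ to be (a.e.\ equal to) a $C^{\infty}$ harmonic function, and then the mean value property together with Hölder's inequality yields $|u(x)|\leq |B(x,R)|^{-1}\,\|u\|_{L^{p}}\,|B(x,R)|^{1-1/p}=c_{n}R^{-n/p}\|u\|_{L^{p}}\to 0$ as $R\to\infty$, so again $u\equiv 0$. Either route settles the lemma, and in both the hypothesis $p<\infty$ is exactly what makes the argument run (the decay in the second approach, the non-integrability of nonzero polynomials in the first).
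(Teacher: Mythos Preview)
Your proof is correct and takes a genuinely different route from the paper's. The paper splits into cases: for $1<p<\infty$ it invokes the fact (quoted from Mart\'inez--Sanz--Periago) that $-\Delta_{p,n}$ is non-negative with dense range, which on a reflexive space forces trivial kernel; for $p=1$ it mollifies, pushing $f$ into a space ${\mathcal T}$ of smooth functions with all derivatives in $L^{1}\cap L^{\infty}$, and then cites the injectivity of $\Delta$ on ${\mathcal T}$ from the same reference. By contrast, your argument is uniform in $p$ and entirely self-contained: either the Fourier route ($|\xi|^{2}\widehat{u}=0\Rightarrow\operatorname{supp}\widehat{u}\subseteq\{0\}\Rightarrow u$ is a polynomial $\Rightarrow u=0$ since nonzero polynomials are not in $L^{p}$), or, more directly still, Weyl's lemma plus the mean value property and H\"older to get $|u(x)|\leq c_{n}R^{-n/p}\|u\|_{L^{p}}\to 0$. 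What the paper's approach buys is a tie-in with the machinery of fractional powers already being used elsewhere in that reference; what your approach buys is elementarity and no need for the $p=1$ versus $p>1$ dichotomy. Either version is perfectly adequate for the role the lemma plays later in the paper.
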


\begin{proof}
If $1<p<\infty,$ then the statement immediately follows from the fact that the operator $-\Delta_{p,n}$ is non-negative, with dense domain and range (cf. \cite[pp. 256, 266]{MSP}). Suppose now that $p=1$ and $\Delta_{p,n}f=0$ for some $f\in X.$ Then \cite[Lemma 3.2]{MSP} implies that, for every $\varphi \in {\mathcal D}({\mathbb R}^{n})$ and for every multi-index $\eta \in {\mathbb N}_{0}^{n},$ the function 
$\varphi \ast f$ belongs to the space ${\mathcal T}$ consisting of those $C^{\infty}({\mathbb R}^{n})$-functions whose any partial
derivative belongs to $L^{1}({\mathbb R}^{n}) \cap L^{\infty}({\mathbb R}^{n}).$ Since $\Delta_{p,n}(\varphi \ast f)=\varphi \ast \Delta_{p,n}f=0,$ $\varphi \in {\mathcal D}({\mathbb R}^{n})$ and the operator $\Delta_{{\mathcal T}}$ is injective by \cite[Remark 3.3]{MSP}, we have that $\varphi \ast f=0,$ $\varphi \in  {\mathcal D}({\mathbb R}^{n}).$ Hence, $f=0.$
\end{proof}

\section{FORMULATION AND PROOF OF MAIN RESULTS. EXAMPLES AND APPLICATIONS}

Before stating our first main result, we need to repeat some notations and preliminaries from \cite{vlad-mar-prim}.
Suppose that $n\in {\mathbb N},$ $0<\zeta \leq 2,$
$q_{0},q_{1},\cdot \cdot \cdot ,q_{n}$ are given non-negative integers satisfying $q_{0}=0$ and $0<q_{1}\leq q_{2}\leq \cdot \cdot \cdot \leq q_{n}.$ Let $A_{0},A_{1},\cdot \cdot \cdot, A_{n-1},A_n$ be closed linear operators acting on $X.$ Set $A_{n}:=B,$ $T_{i}u(t):=A_{i}({\mathbf D}_{t}^{\zeta})^{q_{i}}u(t),$ $t\geq 0,$ $i\in {\mathbb N}_{n}^{0}$ and
$$
P_{\lambda}:=\lambda^{q_{n}\zeta}B+\sum \limits_{i=0}^{n-1}\lambda^{q_{i}\zeta}A_{i},\quad \lambda \in {\mathbb C} \setminus \{0\}.
$$
Of concern is the following abstract degenerate multi-term Cauchy problem:
\begin{equation}\label{snarky}
\sum \limits_{i=0}^{n}T_{i}u(t)=0,\quad t\geq 0,
\end{equation}
accompanied with the following initial conditions:
\begin{align}
\notag
& \Bigl(\bigl ({\mathbf D}_{t}^{\zeta}\bigr)^{j}u(t)\Bigr)_{t=0}=u_{j},\ j\in {\mathbb N}_{q_{n}-1}^{0},\mbox{ if }\zeta \in (0, 1],\mbox{ resp., }\\\label{puppy}&
\Bigl(\bigl ({\mathbf D}_{t}^{\zeta}\bigr)^{j}u(t)\Bigr)_{t=0}=u_{j},\ j\in {\mathbb N}_{q_{n}-1}^{0}; \ \Bigl(\frac{d}{dt}\bigl ({\mathbf D}_{t}^{\zeta}\bigr)^{j}u(t)\Bigr)_{t=0}=v_{j},\ j\in {\mathbb N}_{q_{n}-1}^{0},\mbox{ if }\zeta \in (1,2].
\end{align}
In \cite{vlad-mar-prim}, we have considered the abstract Cauchy problem [(\ref{snarky})-(\ref{puppy})]
with $0<\zeta \leq 1.$ The notion of a strong solution of
problem [(\ref{snarky})-(\ref{puppy})], introduced in the first part of following definition, coincides with the corresponding notion introduced in \cite[Definition 1]{vlad-mar-prim} in the case that $0<\zeta \leq 1.$

\begin{defn}\label{RES}
\begin{itemize}
\item[(i)] A function $u\in C([0,\infty): X)$ is said to be a strong solution of
problem [(\ref{snarky})-(\ref{puppy})]
iff the term $T_{i}u(t)$ is well defined and continuous for any $t\geq 0,$ $i\in {\mathbb N}_{n}^{0}$,
and [(\ref{snarky})-(\ref{puppy})] holds identically on $[0,\infty).$ 
\item[(ii)] A function $u\in C([0,\infty): X)$ is said to be an entire solution of
problem [(\ref{snarky})-(\ref{puppy})] iff $u(\cdot)$ is a strong solution of [(\ref{snarky})-(\ref{puppy})] and it can be analytically extended to the whole complex plane, as well as any of the terms $A_{i}u^{(p)}(\cdot)$ ($0\leq i\leq n,$ $p\in {\mathbb N}_{0}$) can be analytically extended to the whole complex plane.
\item[(iii)] A function $u\in C([0,\infty): X)$ is said to be an analytical solution of
problem [(\ref{snarky})-(\ref{puppy})] on the region ${\mathbb C} \setminus (-\infty,0]$ iff $u(\cdot)$ is a strong solution of [(\ref{snarky})-(\ref{puppy})] and it can be extended to the whole complex plane, analytically on the region ${\mathbb C} \setminus (-\infty,0]$ and continuously on the region ${\mathbb C} \setminus (-\infty,0)$, as well as any of the terms $A_{i}({\mathbf D}_{t}^{\zeta})^{p}u(t)$ ($0\leq i\leq n,$  $p\in {\mathbb N}_{0},$ $t\geq 0$) is well defined and can be extended to the whole complex plane, analytically on the region ${\mathbb C} \setminus (-\infty,0]$ and continuously on the region ${\mathbb C} \setminus (-\infty,0).$ 
\end{itemize}
\end{defn}

Set $S_{\omega}:=\{ j\in {{\mathbb N}_{n}^{0}} : q_{j}-1\geq \omega\}$ ($\omega \in {{\mathbb N}_{q_{n}-1}^{0}}$). We need to introduce the following condition:
\begin{equation}\label{818}
-\infty <\nu ' <\min \limits_{\omega \in {{\mathbb N}_{q_{n}-1}^{0}}} \Bigl[ -\Bigl(q_{n}-1-\omega +\max \bigl\{q_{j} : j\in {\mathbb N}_{n}^{0} \setminus S_{\omega}\bigr\} \Bigr) \Bigr].
\end{equation}
Then $n\in S_{\omega}$ for all $\omega \in {{\mathbb N}_{q_{n}-1}^{0}},$ and  (\ref{818}) holds provided that $-\infty <\nu'<1-q_{n}.$ 

Now we are ready to formulate the following theorem.

\begin{thm}\label{GZA}
Suppose that the operator $C\in L(X)$ is injective, $CA_{i}\subseteq A_{i}C,$ $i\in {\mathbb N}_{n}^{0},$
$0<\zeta \leq 2,$
$\phi \in (-\pi,\pi],$ $\theta \in (\pi-\pi \zeta,\pi-(\pi \zeta)/2),$ $a>r>0$ and $\nu'$ satisfies (\ref{818}). Assume, further, that the following holds:
\begin{itemize}
\item[(i)] The operator family $\{(1+|\lambda|)^{-\nu'}P_{\lambda^{1/\zeta}}^{-1}C : \lambda \in e^{i\phi}\Sigma_{(\zeta \pi/2)+\theta},\ |\lambda|\geq r \}\subseteq L(X)$ is both equicontinuous and strongly continuous.
\item[(ii)] For every $x\in X$ and $i\in {\mathbb N}_{n-1}^{0},$ the mapping $\lambda \mapsto A_{i}P_{\lambda^{1/\zeta}}^{-1}Cx,$ $\lambda \in e^{i\phi}\Sigma_{(\zeta \pi/2)+\theta},\ |\lambda|\geq r$ is continuous and there exists $v_{i}\in {\mathbb N}$ such that the operator family $\{(1+|\lambda|)^{-v_{i}}A_{i}P_{\lambda^{1/\zeta}}^{-1}C : \lambda \in e^{i\phi}\Sigma_{(\zeta \pi/2)+\theta},\ |\lambda|\geq r\}\subseteq L(X)$ is equicontinuous.
\end{itemize}
Denote by ${\mathfrak W}$ (${\mathfrak W}_{e}$)
the subspace of $X^{q_{n}},$ resp. $X^{2q_{n}}$, consisting of all initial values $(u_{0},\cdot \cdot \cdot,u_{q_{n}-1})\in X^{q_{n}},$
resp. $(u_{0},\cdot \cdot \cdot,u_{q_{n}-1}; v_{0},\cdot \cdot \cdot,v_{q_{n}-1})\in X^{2q_{n}},$ subjected to some analytical solution $u(\cdot)$ of problem [(\ref{snarky})] on the region ${\mathbb C} \setminus (-\infty,0]$  (entire solution $ u(\cdot)$ of problem [(\ref{snarky})]). Then ${\mathfrak W}$
is dense in 
$(C(
\bigcap_{j=0}^{n}D(A_{j})))^{q_{n}}$ for the topology of $X^{q_{n}},$ provided that $0<\zeta<1,$ resp. $(C(
\bigcap_{j=0}^{n}D(A_{j})))^{2q_{n}}$ for the topology of $X^{2q_{n}},$ provided that $1<\zeta<2$; furthermore, ${\mathfrak W}_{e}$
is dense in 
$(C(
\bigcap_{j=0}^{n}D(A_{j})))^{q_{n}}$  for the topology of $X^{q_{n}},$ provided that $\zeta=1,$ resp.  $(C(
\bigcap_{j=0}^{n}D(A_{j})))^{2q_{n}}$  for the topology of $X^{2q_{n}},$ provided that $\zeta=2.$
\end{thm}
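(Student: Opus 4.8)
The plan is to build, for each admissible initial value in a suitable dense subset, an explicit analytic (resp. entire) solution of \eqref{snarky} via a contour-integral (Laplace-type) representation, and then to show that the set of initial values so obtained is dense. First I would fix a vector of the form $\bigl(C\phi_0(A)x_0,\dots\bigr)$ — more precisely, work with initial data coming from the regularized class, i.e. data of the form $C y_j$ with $y_j\in \bigcap_{j=0}^n D(A_j)$; by the commutation hypothesis $CA_i\subseteq A_iC$ and the injectivity of $C$, the range $R(C)\cap\bigcap_j D(A_j)$ together with $\overline{X_0}=X$ (property $(\triangleright)$) already gives the required density, so the real content is existence of the solution. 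For such data I would define
$$
u(t):=\frac{1}{2\pi i}\int_{\Gamma} e^{\lambda t}\,\lambda^{q_n\zeta-1}\,P_{\lambda}^{-1}C\Bigl(\text{polynomial in }\lambda\text{ built from the }u_j,v_j\Bigr)\,d\lambda,
$$
where $\Gamma$ is the boundary of a sector $e^{i\phi}\Sigma_{(\zeta\pi/2)+\theta}$ truncated near the origin by the arc $|\lambda|=r$; the choice $\theta\in(\pi-\pi\zeta,\pi-(\pi\zeta)/2)$ is exactly what makes $e^{\lambda t}$ decay on the rays of $\Gamma$ for $t$ in the sector ${\mathbb C}\setminus(-\infty,0]$ (resp. for all $t\in{\mathbb C}$ when $\zeta=1$ or $\zeta=2$, where the opening becomes a half-plane complement and the integrand is entire in a way that lets one rotate $\Gamma$ freely).

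Next I would verify that $u(\cdot)$ is a strong solution. The polynomial weight $(1+|\lambda|)^{\nu'}$ absorbed by hypothesis (i), combined with condition \eqref{818} (which bounds $\nu'$ below by $-(q_n-1-\omega+\max\{q_j:j\notin S_\omega\})$ for each $\omega$), is engineered precisely so that each term $A_i({\mathbf D}_t^\zeta)^{q_i}u(t)$ — obtained by pulling $A_i\lambda^{q_i\zeta}$ inside the integral using hypothesis (ii) and the resolvent identity $A_iP_\lambda^{-1}=$ (bounded) — converges absolutely and uniformly on compacts. Summing these terms and using the algebraic identity $\sum_{i=0}^n \lambda^{q_i\zeta}A_i P_\lambda^{-1}=I$ (on the relevant space), the contour integral collapses and one checks $\sum_i T_i u(t)=0$; the initial conditions \eqref{puppy} are recovered by the standard argument of shifting $\Gamma$ to $\pm\infty$ and reading off residue/asymptotic terms, here controlled quantitatively by Lemma~\ref{1.1} applied to the scalar Mittag-Leffler factors that appear after expanding $P_\lambda^{-1}$. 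Analyticity of $u$ and of all the terms $A_i({\mathbf D}_t^\zeta)^p u(t)$ on ${\mathbb C}\setminus(-\infty,0]$, with continuous extension to ${\mathbb C}\setminus(-\infty,0)$, follows from differentiating under the integral sign, which is justified by the same equicontinuity bounds; in the borderline cases $\zeta\in\{1,2\}$ the sector opening exceeds $\pi$, the contour can be closed, and one gets analyticity on all of ${\mathbb C}$, i.e. an entire solution, landing the data in ${\mathfrak W}_e$.

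The main obstacle I expect is the bookkeeping in the two steps above: (a) making the \emph{simultaneous} convergence of all countably many terms $A_i u^{(p)}(\cdot)$ (resp. $A_i({\mathbf D}_t^\zeta)^p u(t)$) rigorous — one must check that the single polynomial order $\nu'$ from (i) and the finitely many orders $v_i$ from (ii) suffice to beat the factor $|\lambda|^{q_i\zeta}$ uniformly in $p$, which is where \eqref{818} with the quantity $\max\{q_j:j\in{\mathbb N}_n^0\setminus S_\omega\}$ does its work; and (b) the precise verification that the contour integral reproduces exactly the prescribed initial conditions \eqref{puppy}, including the derivative conditions $v_j$ when $\zeta\in(1,2]$, for which one needs the fine asymptotics of $E_{\beta,\gamma}$ from Lemma~\ref{1.1} rather than just $O(|z|^{-l})$ crudeness, together with the identification $\lceil\zeta\rceil$-dependent normalizations in the definition of ${\mathbf D}_t^\zeta$. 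Once the solution is produced for data in $\bigl(C(\bigcap_j D(A_j))\bigr)^{q_n}$ (resp. $\bigl(\cdots\bigr)^{2q_n}$), density in that space for the ambient $X^{q_n}$ (resp. $X^{2q_n}$) topology is immediate, since $C$ has dense range on each coordinate and commutes with the $A_i$, completing the argument.
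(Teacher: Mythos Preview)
Your plan has a genuine gap: it omits the $\epsilon$-regularization that is the heart of the argument. The paper does \emph{not} construct a single solution with prescribed initial data $(Cy_0,\dots,Cy_{q_n-1})$; instead it defines, for each $\epsilon>0$, a solution
\[
u_{\epsilon}(z)=\sum_{w}\sum_{j\in S_\omega}\frac{1}{2\pi i}\int_{\Gamma}e^{-\epsilon(a-\lambda)^{b/\zeta}}E_{\zeta}\bigl(z^{\zeta}\lambda e^{i\phi}\bigr)(\lambda e^{i\phi})^{q_j-1-w}P_{(\lambda e^{i\phi})^{1/\zeta}}^{-1}CA_j x_w\,d\lambda,
\]
with the damping factor $e^{-\epsilon(a-\lambda)^{b/\zeta}}$ (here $b>1$ is chosen so that $b/\zeta$ beats the growth $|\lambda|^{1/\zeta}$ coming from $E_\zeta$). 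One then shows that the initial data $u_{l}^{\epsilon}:=\bigl((\mathbf D_t^\zeta)^{l}u_\epsilon\bigr)(0)$ converge to $e^{-i\phi}Cx_l$ as $\epsilon\to 0+$. This is precisely a \emph{density} statement: the set ${\mathfrak W}$ comes arbitrarily close to each point of $(C(\bigcap_j D(A_j)))^{q_n}$, but is not shown to contain it. Your unregularized integral cannot verify the initial conditions: at $t=0$ the kernel equals $1$, and after rewriting $\sum_{j\in S_\omega}(\lambda e^{i\phi})^{q_j}P^{-1}CA_j x_w=Cx_w-\sum_{j\notin S_\omega}(\cdots)$ one is left with $\int_\Gamma(\lambda e^{i\phi})^{l-1-w}Cx_w\,d\lambda$, which diverges. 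The paper's device handles exactly this term via the limit $\epsilon\to 0+$ and a Cauchy-formula computation; condition \eqref{818} is used only on the complementary sum over $j\notin S_\omega$, where it gives absolute convergence and allows dominated convergence plus Lemma~\ref{tuga-jesenja} (analyticity of $\lambda\mapsto P_\lambda^{-1}Cx$) to conclude that piece vanishes.

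Two further points. First, the kernel must be the Mittag--Leffler function $E_\zeta(z^\zeta\lambda e^{i\phi})$ (and $F_\lambda(z)=zE_{\zeta,2}(z^\zeta\lambda e^{i\phi})$ for the $v_j$-part when $1<\zeta\le 2$), not $e^{\lambda t}$: it is $E_\zeta(\lambda t^\zeta)$, not the exponential, that satisfies $\mathbf D_t^\zeta E_\zeta(\lambda t^\zeta)=\lambda E_\zeta(\lambda t^\zeta)$, so your algebraic cancellation $\sum_i\lambda^{q_i\zeta}A_iP_\lambda^{-1}=I$ would not match the action of $T_i$ on $u$. Second, the references to property $(\triangleright)$, to $\phi_0(A)$, and to ``$C$ has dense range'' are out of place here---those belong to the commuting-generators framework of Theorem~\ref{kragujevac}, whereas Theorem~\ref{GZA} is purely abstract and assumes neither that the $A_j$ arise from such a calculus nor that $R(C)$ is dense. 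The density asserted is only \emph{within} $(C(\bigcap_j D(A_j)))^{q_n}$, and it is obtained by the $\epsilon$-approximation, not by any density property of $C$.
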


To prove Theorem \ref{GZA}, we need the following lemma (cf. also \cite[Lemma 1.1, Theorem 1.1]{svir3}).

\begin{lem}\label{tuga-jesenja}
Let $x\in X.$ Then the mapping 
$$
\lambda \mapsto P_{(\lambda e^{i\phi})^{1/\zeta}}^{-1}Cx,\quad \lambda \in \Sigma_{(\zeta \pi/2)+\theta},\ |\lambda|> r
$$ 
is analytic.
\end{lem}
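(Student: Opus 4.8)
\textbf{Proof plan for Lemma \ref{tuga-jesenja}.}

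The plan is to reduce analyticity of $\lambda \mapsto P_{(\lambda e^{i\phi})^{1/\zeta}}^{-1}Cx$ on the sector $\Sigma_{(\zeta\pi/2)+\theta}$ (with $|\lambda|>r$) to the local analyticity criterion for vector-valued functions recalled in the Preliminaries, namely that weak analyticity suffices since $X$ is sequentially complete. So I would first fix $\lambda_{0}$ in the region and show $P_{\mu}^{-1}C$ depends analytically on $\mu$ in a neighborhood of $\mu_{0}:=(\lambda_{0}e^{i\phi})^{1/\zeta}$, using a Neumann-series perturbation argument. Concretely, write $P_{\mu}=P_{\mu_{0}}+(P_{\mu}-P_{\mu_{0}})$; since $P_{\mu}$ is an entire (in fact polynomial-type in the powers $\mu^{q_{i}\zeta}$) $L(X)$-valued-on-a-core expression and $C$ commutes with each $A_{i}$, one gets the factorization $P_{\mu}^{-1}C = \bigl[I + P_{\mu_{0}}^{-1}(P_{\mu}-P_{\mu_{0}})\bigr]^{-1}P_{\mu_{0}}^{-1}C$ whenever the bracket is invertible, which holds for $\mu$ close enough to $\mu_{0}$ because $\|P_{\mu_{0}}^{-1}(P_{\mu}-P_{\mu_{0}})\| < 1$ there. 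The key point making this legitimate is that hypothesis (i) of Theorem \ref{GZA} gives $(1+|\lambda|)^{-\nu'}P_{\lambda^{1/\zeta}}^{-1}C \in L(X)$ with equicontinuity, so $P_{\mu_{0}}^{-1}C$ is a genuine bounded operator and the terms $P_{\mu_{0}}^{-1}A_{i}$ applied after $C$ make sense via (ii); one has to route everything through $C$ so that the unbounded operators $A_{i}$ never act alone.

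Next I would expand the resulting Neumann series: for $\mu$ in a small disc around $\mu_{0}$,
$$
P_{\mu}^{-1}Cx = \sum_{k=0}^{\infty} (-1)^{k}\bigl(P_{\mu_{0}}^{-1}(P_{\mu}-P_{\mu_{0}})\bigr)^{k} P_{\mu_{0}}^{-1}Cx,
$$
and since $P_{\mu}-P_{\mu_{0}} = \sum_{i=0}^{n}\bigl(\mu^{q_{i}\zeta}-\mu_{0}^{q_{i}\zeta}\bigr)A_{i}$ (with $A_{n}=B$), each coefficient map $\mu \mapsto \mu^{q_{i}\zeta}-\mu_{0}^{q_{i}\zeta}$ is analytic near $\mu_{0}$ (principal branch, $\mu_{0}\neq 0$). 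Combined with the bounded operators $P_{\mu_{0}}^{-1}A_{i}C$ — which exist and are bounded by hypothesis (ii) and the commutation $CA_{i}\subseteq A_{i}C$ — each summand is an analytic $X$-valued function of $\mu$, the series converges uniformly on a slightly smaller disc, and hence the sum is analytic in $\mu$. Finally I would compose with the analytic change of variable $\lambda \mapsto \mu = (\lambda e^{i\phi})^{1/\zeta}$, which is analytic and non-vanishing on $\Sigma_{(\zeta\pi/2)+\theta}$ with $|\lambda|>r$ (here one checks $(\zeta\pi/2)+\theta < \pi$, so the rotated-and-rooted sector stays in a domain where the principal root is single-valued), to conclude analyticity of $\lambda \mapsto P_{(\lambda e^{i\phi})^{1/\zeta}}^{-1}Cx$.

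The main obstacle I expect is bookkeeping with the unbounded operators: $P_{\mu}$ itself is not bounded, only $P_{\mu}^{-1}C$ and $A_{i}P_{\mu}^{-1}C$ are, so the Neumann-series manipulation has to be done carefully so that every intermediate operator appearing is actually in $L(X)$. The trick is to never write $P_{\mu_{0}}^{-1}(P_{\mu}-P_{\mu_{0}})$ as such but rather to absorb one copy of $C$: since the underlying space is only sequentially complete locally convex, I would instead verify weak analyticity — apply an arbitrary $x^{*}\in X^{*}$, reduce to scalar-valued analyticity of $\lambda \mapsto \langle x^{*}, P_{(\lambda e^{i\phi})^{1/\zeta}}^{-1}Cx\rangle$, and prove that via the scalar Neumann series, where convergence and analyticity are classical. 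A secondary but routine point is confirming the domain constraints: $x\in X$ is arbitrary, $Cx$ need not lie in any $D(A_{i})$, but $P_{\mu}^{-1}Cx$ does by the very definition of $P_{\mu}^{-1}$, and $A_{i}P_{\mu}^{-1}Cx$ is controlled by hypothesis (ii) — these are exactly the inputs that let the perturbation series close.
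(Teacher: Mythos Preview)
Your Neumann-series plan has a genuine gap that you partly anticipate but do not actually close. The series
\[
P_{\mu}^{-1}Cx \;=\; \sum_{k\geq 0}(-1)^{k}\bigl(P_{\mu_{0}}^{-1}(P_{\mu}-P_{\mu_{0}})\bigr)^{k}P_{\mu_{0}}^{-1}Cx
\]
requires, already at $k=1$, that $P_{\mu_{0}}^{-1}$ act on the vector $\sum_{i}(\mu^{q_{i}\zeta}-\mu_{0}^{q_{i}\zeta})A_{i}P_{\mu_{0}}^{-1}Cx$. Hypothesis~(ii) tells you this vector lies in $X$, but nothing in (i)--(ii) says $P_{\mu_{0}}^{-1}$ alone is bounded or even defined on all of $X$; only the composite $P_{\mu_{0}}^{-1}C$ is in $L(X)$. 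Your proposed fix, ``absorb one copy of $C$'', cannot work for $k\geq 1$: the single $C$ in $P_{\mu}^{-1}Cx$ is already consumed by the rightmost $P_{\mu_{0}}^{-1}$, and there is no second copy to route through the next $P_{\mu_{0}}^{-1}$. Likewise, your claim that $P_{\mu_{0}}^{-1}A_{i}C\in L(X)$ follows from~(ii) and $CA_{i}\subseteq A_{i}C$ is unjustified: hypothesis~(ii) controls $A_{i}P_{\mu}^{-1}C$, with the operators in the opposite order, and the commutation relation does not convert one into the other.

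The paper's proof circumvents exactly this obstruction by first passing to $C^{2}$. A Hilbert-resolvent-type identity
\[
P_{\lambda}^{-1}C^{2}x - P_{z}^{-1}C^{2}x \;=\; (z-\lambda)\,P_{\lambda}^{-1}C\,\Bigl[\text{polynomial in }B,A_{n-1},\dots,A_{1}\Bigr]\,P_{z}^{-1}Cx
\]
is established (after reducing to $\zeta=1$, $\phi=0$, $q_{i}=i$), in which each appearance of $P^{-1}$ is immediately followed by a $C$, so every operator in sight is bounded by (i)--(ii). This yields weak, hence strong, analyticity of $\lambda\mapsto P_{\lambda}^{-1}C^{2}x$. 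The descent from $C^{2}$ to $C$ is then done by a Morera argument: $\lambda\mapsto P_{\lambda}^{-1}Cx$ is strongly continuous by hypothesis~(i), and for any closed contour $\gamma$ one has $C\oint_{\gamma}P_{\lambda}^{-1}Cx\,d\lambda=\oint_{\gamma}P_{\lambda}^{-1}C^{2}x\,d\lambda=0$, so injectivity of $C$ gives the vanishing of the contour integral and hence analyticity. If you want to repair your argument, the cleanest route is to replace the full Neumann expansion by this single-step resolvent identity together with the Morera/injectivity descent.
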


\begin{proof}
Without loss of generality, we may assume that $q_{i}=i$ ($i\in {\mathbb N}_{n}^{0}$), $\zeta=1$ and $\phi=0.$ 
Clearly, (ii) holds 
for every $x\in X$ and $i\in {\mathbb N}_{n}^{0}.$  Furthermore, the following analogon of the Hilbert resolvent equation holds:
\begin{align*}
& P_{\lambda}^{-1}C^{2}x  -P_{z}^{-1}C^{2}x
=(z-\lambda)P_{\lambda}^{-1}C\\& \times \Biggl[ \sum_{k=1}^{n-1}{n \choose k}\bigl(z-\lambda \bigr)^{k-1}\lambda^{n-k}B+\sum_{k=1}^{n-2}{n-1 \choose k}\bigl(z-\lambda \bigr)^{k-1}\lambda^{n-1-k}A_{n-1}+\cdot \cdot \cdot +A_{1} \Biggr]
\\ & \times P_{z}^{-1}Cx,\mbox{ provided } \lambda,\ z\in  \Sigma_{(\zeta \pi/2)+\theta}\mbox{ and } |\lambda|,\ |z|> r.
\end{align*}
This implies that the mapping $\lambda \mapsto P_{\lambda}^{-1}C^{2}x,$ $\lambda \in \Sigma_{(\zeta \pi/2)+\theta},$ $|\lambda|>r$
is weakly analytic and therefore analytic, as well as that
\begin{align*}
&\frac{d}{d\lambda}\Bigl \langle x^{\ast},  P_{\lambda}^{-1}C^{2}x \Bigr \rangle 
\\ & =-\Bigl \langle x^{\ast} , P_{\lambda}^{-1}\bigl[ n \lambda^{n-1}B+(n-1)\lambda^{n-2}A_{n-1}+\cdot \cdot \cdot +A_{1}\bigr]P_{\lambda}^{-1}Cx\Bigr \rangle,
\end{align*}
provided $x^{\ast} \in X^{\ast}, $ $\ \lambda 
 \in \Sigma_{(\zeta \pi/2)+\theta}$ and $|\lambda|>r.$ Using the Morera theorem and the observation from \cite[Remark 2.7]{ralf}, the above implies that the mapping $ \lambda \mapsto P_{\lambda}^{-1}Cx,$ $\lambda \in \Sigma_{(\zeta \pi/2)+\theta},\ |\lambda|> r $ is analytic, as claimed.
\end{proof}

Now we can proceed to the proof of Theorem \ref{GZA}.

\begin{proof}
Suppose first $0<\zeta\leq 1.$ Clearly, $(\zeta \pi/2)+\theta<\pi,$ $\pi \zeta /2 >\pi-(\zeta \pi/2)-\theta$ and we can find a number $b\in {\mathbb R}$ satisfying
$$
1<b<\frac{\pi \zeta /2}{\pi -(\zeta \pi/2)-\theta}.
$$
Denote by $\Gamma$ the upwards oriented boundary of the region $\{\lambda \in \Sigma_{(\zeta \pi/2)+\theta} : |\lambda|\geq r\}.$ Let $\Omega$ be the open region on the left of $\Gamma.$ Then there exists a sufficiently large number $R>0$ such that
$a-\lambda \in \Sigma_{\pi-(\zeta \pi/2)-\theta}$ for all $\lambda \in \Omega \cup \Gamma$ with $|\lambda| \geq R.$
This implies $|e^{-\epsilon (a-\lambda)^{b/\zeta}}|=e^{-\epsilon \Re ((a-\lambda)^{b/\zeta})}\leq
e^{-\epsilon |a-\lambda|^{b/\zeta}\cos(b\zeta^{-1}(\pi-(\pi \zeta/2)-\theta))}
,$ provided $\epsilon>0,$ $\lambda \in \Omega \cup \Gamma$ and $|\lambda|\geq R.$
Keeping in mind Lemma \ref{1.1}, we obtain the existence of a constant $c_{\zeta}'>0$ such that $|E_{\zeta}( z^{\zeta}\lambda e^{i\phi})|\leq E_{\zeta}(|z|^{\zeta} |\lambda|)\leq c_{\zeta}'e^{|z||\lambda|^{1/\zeta}}$ for all $z\in {\mathbb C}$ and $\lambda \in {\mathbb C}.$ Hence, there exists a constant $c_{\zeta}>0$ such that
\begin{equation}\label{estimate}
\Bigl| e^{-\epsilon (a-\lambda)^{b/\zeta}}E_{\zeta}\bigl( z^{\zeta}\lambda e^{i\phi}\bigr) \Bigr| \leq c_{\zeta}e^{-\epsilon |a-\lambda|^{b/\zeta}\cos(b\zeta^{-1}(\pi-(\pi \zeta/2)-\theta))+|z||\lambda|^{1/\zeta}},
\end{equation}
for any $z\in {\mathbb C},\ \epsilon>0$ and $\lambda \in \Omega.$
Suppose now that $x_{w}\in \bigcap_{j=0}^{n}D(A_{j})$ for all $w\in {\mathbb N}_{q_{n}-1}^{0}.$ Then (i) and the estimate (\ref{estimate}) enable us to define the function $z\mapsto u_{\epsilon}(z),$ $z\in {\mathbb C},$ for any $\epsilon>0,$ by
$$
u_{\epsilon}(z):=\sum \limits_{w=0}^{q_{n}-1}\sum \limits_{j\in S_{\omega}}\frac{1}{2\pi i}\int_{\Gamma}e^{-\epsilon (a-\lambda)^{b/\zeta}}E_{\zeta}\bigl( z^{\zeta}\lambda e^{i\phi}\bigr) \bigl(\lambda e^{i\phi}\bigr)^{q_{j}-1-w}P_{(\lambda e^{i\phi})^{1/\zeta}}^{-1}CA_{j}x_{w}\, d\lambda .
$$
It can be simply verified that the mapping $z\mapsto u_{\epsilon}(z),$ $z\in {\mathbb C} \setminus (-\infty,0)$ is continuous ($\epsilon>0$).
Using Lemma \ref{1.1} and the proof of \cite[Theorem 2]{vlad-mar-prim}, it readily follows that the mapping $z\mapsto u_{\epsilon}(z),$ $z\in  {\mathbb C} \setminus (-\infty,0]$ is analytic ($\epsilon>0$), with
\begin{equation}\label{nujabes}
u^{\prime}_{\epsilon}(z)=\sum \limits_{w=0}^{q_{n}-1}\sum \limits_{j\in S_{\omega}}\frac{1}{2\pi i}\int_{\Gamma}e^{-\epsilon (a-\lambda)^{b/\zeta}}z^{\zeta-1}E_{\zeta,\zeta}\bigl( z^{\zeta}\lambda e^{i\phi}\bigr) \bigl(\lambda e^{i\phi}\bigr)^{q_{j}-w}P_{(\lambda e^{i\phi})^{1/\zeta}}^{-1}CA_{j}x_{w}\, d\lambda ,
\end{equation}
for any $\epsilon>0$ and $z\in {\mathbb C} \setminus (-\infty,0];$ furthermore, the mapping $z\mapsto u_{\epsilon}(z),$ $z\in {\mathbb C}$ is entire provided $\epsilon>0,$ $\zeta=1$ and, in this case, (\ref{nujabes}) holds 
for any $\epsilon>0$ and $z\in {\mathbb C}.$ The proof of \cite[Theorem 2]{vlad-mar-prim} also shows that the term $({\mathbf D}_{t}^{\zeta})^{p}u_{\epsilon}(t),$ $t\geq 0$ is well defined for any $p\in {\mathbb N}_{0}$ and  $\epsilon>0,$ with
\begin{align}
\notag 
({\mathbf D}_{t}^{\zeta})^{p}& u_{\epsilon}(t)
\\\label{london-acid} &=\sum \limits_{w=0}^{q_{n}-1}\sum \limits_{j\in S_{\omega}}\frac{1}{2\pi i}\int_{\Gamma}e^{-\epsilon (a-\lambda)^{b/\zeta}}E_{\zeta}\bigl( t^{\zeta}\lambda e^{i\phi}\bigr) \bigl(\lambda e^{i\phi}\bigr)^{p+q_{j}-1-w}P_{(\lambda e^{i\phi})^{1/\zeta}}^{-1}CA_{j}x_{w}\, d\lambda ;
\end{align}
cf. also the formula \cite[(1.25)]{bajlekova}. Combined with the Cauchy theorem, (ii) and Lemma \ref{1.1},
the above implies that the term 
$A_{i}({\mathbf D}_{t}^{\zeta})^{p}u_{\epsilon}(t)$ is well defined for $t\geq 0,$ $i\in {\mathbb N}_{n}^{0},$ $p\in {\mathbb N}_{0}$ and $\epsilon>0,$
with
\begin{align*}
& A_{i}\bigl({\mathbf D}_{t}^{\zeta}\bigr)^{p}u_{\epsilon}(t)
\\& =\sum \limits_{w=0}^{q_{n}-1}\sum \limits_{j\in S_{\omega}}\frac{1}{2\pi i}\int_{\Gamma}e^{-\epsilon (a-\lambda)^{b/\zeta}}E_{\zeta}\bigl( t^{\zeta}\lambda e^{i\phi}\bigr) \bigl(\lambda e^{i\phi}\bigr)^{p+q_{j}-1-w}A_{i}P_{(\lambda e^{i\phi})^{1/\zeta}}^{-1}CA_{j}x_{w}\, d\lambda.
\end{align*}
This implies that, for every $\epsilon>0,$ any of the terms $A_{i}({\mathbf D}_{t}^{\zeta})^{p}u_{\epsilon}(\cdot)$ ($0\leq i\leq n,$ $p\in {\mathbb N}_{0}$) can be extended to the whole complex plane, analytically on the region ${\mathbb C} \setminus (-\infty,0]$ and continuously on the region ${\mathbb C} \setminus (-\infty,0);$ we only need to replace the variable $t\geq 0,$ appearing in the above formula, with the variable $z\in {\mathbb C}.$ 
Furthermore,
\begin{align*}
& \sum \limits_{i=0}^{n}A_{i}({\mathbf D}_{t}^{\zeta})^{q_{i}}u_{\epsilon}(t)
\\ & =\sum \limits_{w=0}^{q_{n}-1}\sum \limits_{j\in S_{\omega}}\sum \limits_{i=0}^{n}\frac{1}{2\pi i}\int_{\Gamma}e^{-\epsilon (a-\lambda)^{b/\zeta}}E_{\zeta}\bigl( t^{\zeta}\lambda e^{i\phi}\bigr) \bigl(\lambda e^{i\phi}\bigr)^{q_{i}+q_{j}-1-w}A_{i}P_{(\lambda e^{i\phi})^{1/\zeta}}^{-1}CA_{j}x_{w}\, d\lambda
\\ & =\sum \limits_{w=0}^{q_{n}-1}\sum \limits_{j\in S_{\omega}} \frac{1}{2\pi i}\int_{\Gamma}e^{-\epsilon (a-\lambda)^{b/\zeta}}E_{\zeta}\bigl( t^{\zeta}\lambda e^{i\phi}\bigr)\bigl(\lambda e^{i\phi}\bigr)^{q_{j}-1-w}CA_{j}x_{\omega}\, d\lambda=0,\ t\geq 0,\ \epsilon>0,
\end{align*} 
so that for each $\epsilon>0$ the mapping $t\mapsto u_{\epsilon}(t),$ $t\geq 0$ 
is an analytical solution of problem (\ref{snarky}) on the region ${\mathbb C} \setminus (-\infty,0]$ (entire solution of problem (\ref{snarky}), provided that $\zeta=1$). Let $u_{l}^{\epsilon}=( ({\mathbf D}_{t}^{\zeta})^{l}u_{\epsilon}(t))_{t=0},$ $l \in {\mathbb N}_{q_{n}-1}^{0}$ ($\epsilon>0$).
Now we will prove that $(u_{0}^{\epsilon},\cdot \cdot \cdot,u_{q_{n}-1}^{\epsilon})$ converges to $e^{-i\phi}(Cx_{0},\cdot \cdot \cdot , Cx_{q_{n}-1})$ as $\epsilon \rightarrow 0+,$ for the topology of $X^{q_{n}}$ (cf. also \cite[Remark 1(x)]{vlad-mar-prim}). Let $\omega \in {\mathbb N}_{q_{n}-1}^{0}$ and $l\in {\mathbb N}_{q_{n}-1}^{0}$ be fixed. Keeping in mind (\ref{london-acid}), it suffices to prove that the following holds:
$$
\lim \limits_{\epsilon \rightarrow 0+}\sum \limits_{j\in S_{\omega}}\frac{1}{2\pi i}\int_{\Gamma}e^{-\epsilon (a-\lambda)^{b/\zeta}}\bigl(\lambda e^{i\phi}\bigr)^{l+q_{j}-1-w}P_{(\lambda e^{i\phi})^{1/\zeta}}^{-1}CA_{j}x_{w}\, d\lambda =e^{-i\phi}\delta_{ \omega l}Cx_{\omega},
$$
i.e., that
\begin{align}
\notag \lim \limits_{\epsilon \rightarrow 0+}& \frac{1}{2\pi i}\int_{\Gamma}e^{-\epsilon (a-\lambda)^{b/\zeta}}\bigl(\lambda e^{i\phi}\bigr)^{l-1-w}
\\\label{AJ} & \times \Biggl[ Cx_{\omega}-\sum \limits_{j\in {\mathbb N}_{n}^{0} \setminus S_{\omega}}\bigl(\lambda e^{i\phi}\bigr)^{q_{j}}P_{(\lambda e^{i\phi})^{1/\zeta}}^{-1}CA_{j}x_{w}\Biggr]\, d\lambda =e^{-i\phi}\delta_{ \omega l}Cx_{\omega},
\end{align}
where $\delta_{ \omega l}$ denotes the Kronecker delta. Since $|e^{-\epsilon (a-\lambda)^{b/\zeta}}|\leq 1,$ $\lambda \in \Gamma ,$ $\epsilon>0,$ (\ref{818}) and (i) holds, we have that there exists $\sigma>0$ such that
$$
\Biggl |  e^{-\epsilon (a-\lambda)^{b/\zeta}}\bigl(\lambda e^{i\phi}\bigr)^{l-1-w}\bigl(\lambda e^{i\phi}\bigr)^{q_{j}}P_{(\lambda e^{i\phi})^{1/\zeta}}^{-1}CA_{j}x_{w} \Biggr |\leq \mbox{Const. } |\lambda|^{-1-\sigma},
$$  
for any $\lambda \in \Gamma ,$ $\epsilon>0$ and $j\in {\mathbb N}_{n}^{0} \setminus S_{\omega}.$ Applying the dominated convergence theorem, Lemma \ref{tuga-jesenja} and the Cauchy theorem, we get that
\begin{align*}
\lim \limits_{\epsilon \rightarrow 0+}&\frac{1}{2\pi i}\int_{\Gamma}e^{-\epsilon (a-\lambda)^{b/\zeta}}\bigl(\lambda e^{i\phi}\bigr)^{l-1-w}\sum \limits_{j\in {\mathbb N}_{n}^{0} \setminus S_{\omega}}\bigl(\lambda e^{i\phi}\bigr)^{q_{j}}P_{(\lambda e^{i\phi})^{1/\zeta}}^{-1}CA_{j}x_{w}\, d\lambda 
\\ & = \frac{1}{2\pi i}\int_{\Gamma}\bigl(\lambda e^{i\phi}\bigr)^{l-1-w}\sum \limits_{j\in {\mathbb N}_{n}^{0} \setminus S_{\omega}}\bigl(\lambda e^{i\phi}\bigr)^{q_{j}}P_{(\lambda e^{i\phi})^{1/\zeta}}^{-1}CA_{j}x_{w}\, d\lambda =0.
\end{align*}
Taking into account the last formula on p. 170 of \cite{x263}, it readily follows that (\ref{AJ}) golds good.
The proof of theorem is thereby complete in the case that $0<\zeta \leq 1.$ Suppose now $1<\zeta \leq 2.$ Then it is not difficult to show that there exists a finite constant $d_{\zeta}>0$ such that the function $F_{\lambda}(z):=zE_{\zeta,2}(z^{\zeta}\lambda e^{i\phi}),$ $z\in {\mathbb C}$ ($\lambda \in {\mathbb C}$) satisfies 
$F_{\lambda}^{\prime}(z)=E_{\zeta} (z^{\zeta}\lambda e^{i\phi}),$ $z\in {\mathbb C} \setminus (-\infty,0]$ 
and $|F_{\lambda}(z)|\leq d_{\zeta}(1+|z|)e^{|z||\lambda|^{1/\zeta}},$ $z\in {\mathbb C}$ ($\lambda \in {\mathbb C}$). Since for any function $u\in C^{1}([0,\infty) :X)$ with $u^{\prime}(0)=0$ we have ${\mathbf D}_{t}^{\zeta}(g_{1}\ast u)(t)=(g_{1}\ast {\mathbf D}_{\cdot}^{\zeta}u)(t),$ $t\geq 0,$ provided in addition that the term $  {\mathbf D}_{t}^{\zeta}u(t)$ is defined for $t\geq 0,$ it readily follows that
${\mathbf D}_{t}^{\zeta}F_{\lambda}(t)=(g_{1}\ast {\mathbf D_{t}^{\zeta}}E_{\zeta}(\cdot^{\zeta}\lambda e^{i\phi}))(t)=\lambda e^{i\phi}(g_{1}\ast E_{\zeta}(\cdot^{\zeta}\lambda e^{i\phi}))(t)=\lambda e^{i\phi}F_{\lambda}(t),$ $t\geq 0$ ($\lambda \in {\mathbb C}$). 
Let $x_{w},\ y_{w}\in \bigcap_{j=0}^{n}D(A_{j})$ for all $w\in {\mathbb N}_{q_{n}-1}^{0}.$ Define now the solution $u_{\epsilon}(\cdot)$ by 
\begin{align*}
& u_{\epsilon}(z):=\sum \limits_{w=0}^{q_{n}-1}\sum \limits_{j\in S_{\omega}}\frac{1}{2\pi i}\int_{\Gamma}e^{-\epsilon (a-\lambda)^{b/\zeta}}E_{\zeta}\bigl( z^{\zeta}\lambda e^{i\phi}\bigr) \bigl(\lambda e^{i\phi}\bigr)^{q_{j}-1-w}P_{(\lambda e^{i\phi})^{1/\zeta}}^{-1}CA_{j}x_{w}\, d\lambda 
\\ & + \sum \limits_{w=0}^{q_{n}-1}\sum \limits_{j\in S_{\omega}} \frac{1}{2\pi i}\int_{\Gamma}e^{-\epsilon (a-\lambda)^{b/\zeta}}F_{\lambda}(z) \bigl(\lambda e^{i\phi}\bigr)^{q_{j}-1-w}P_{(\lambda e^{i\phi})^{1/\zeta}}^{-1}CA_{j}y_{w}\, d\lambda,
\end{align*}
for any $z\in {\mathbb C}$ and $\epsilon>0.$
It is not difficult to prove that, for every $p\in {\mathbb N}_{0},$ $t\geq 0$ and $\epsilon>0,$ the following holds:
\begin{align*}
& \bigl({\mathbf D}_{t}^{\zeta}\bigr)^{p}u_{\epsilon}(t)
\\ & =\sum \limits_{w=0}^{q_{n}-1}\sum \limits_{j\in S_{\omega}}\frac{1}{2\pi i}\int_{\Gamma}e^{-\epsilon (a-\lambda)^{b/\zeta}}E_{\zeta}\bigl( t^{\zeta}\lambda e^{i\phi}\bigr) \bigl(\lambda e^{i\phi}\bigr)^{p+q_{j}-1-w}P_{(\lambda e^{i\phi})^{1/\zeta}}^{-1}CA_{j}x_{w}\, d\lambda 
\\ & + \sum \limits_{w=0}^{q_{n}-1}\sum \limits_{j\in S_{\omega}} \frac{1}{2\pi i}\int_{\Gamma}e^{-\epsilon (a-\lambda)^{b/\zeta}}F_{\lambda}(t) \bigl(\lambda e^{i\phi}\bigr)^{p+q_{j}-1-w}P_{(\lambda e^{i\phi})^{1/\zeta}}^{-1}CA_{j}y_{w}\, d\lambda
\end{align*}
and
\begin{align*}
& \frac{d}{dt}\bigl({\mathbf D}_{t}^{\zeta}\bigr)^{p}u_{\epsilon}(t)
\\ &=\sum \limits_{w=0}^{q_{n}-1}\frac{1}{2\pi i}\int_{\Gamma}e^{-\epsilon (a-\lambda)^{b/\zeta}}t^{\zeta-1}E_{\zeta,\zeta}\bigl( t^{\zeta}\lambda e^{i\phi}\bigr) \bigl(\lambda e^{i\phi}\bigr)^{p+q_{j}-w}P_{(\lambda e^{i\phi})^{1/\zeta}}^{-1}CA_{j}x_{w}\, d\lambda 
\\ & + \sum \limits_{w=0}^{q_{n}-1}\frac{1}{2\pi i}\int_{\Gamma}e^{-\epsilon (a-\lambda)^{b/\zeta}}E_{\zeta}\bigl( t^{\zeta}\lambda e^{i\phi}\bigr) \bigl(\lambda e^{i\phi}\bigr)^{p+q_{j}-w-1}P_{(\lambda e^{i\phi})^{1/\zeta}}^{-1}CA_{j}y_{w}\, d\lambda .
\end{align*}
The remaining part of proof of theorem can be deduced by repeating almost literally
the arguments that we have already used in the case that $0<\zeta \leq 1.$
\end{proof}

\begin{rem}\label{unexp}
\begin{itemize}
\item[(i)] Theorem \ref{GZA} seems to be new and not considered elsewhere provided that $B\neq I$ or $\zeta \neq 1.$
\item[(ii)] As explained in \cite[Remark 1(iv)]{vlad-mar-prim}, the operator $({\mathbf D}_{s}^{\zeta})^{p}$ and the operator ${\mathbf D}_{s}^{\zeta p}$ can be completely different provided that $\zeta \in (0,2) \setminus \{1\}$ and $p\in {\mathbb N} \setminus \{1\},$ which clearly implies that we have to make a strict distinction between them. Observe also that Theorem \ref{GZA} can be reformulated in the case that $\zeta>2$ (cf. also \cite[Theorem 2.1, Theorem 2.2]{fcaa}) and that we can prove a similar result on the existence and uniqueness of entire and analytical solutions of problem (DFP)$_{R}$ considered in \cite{vlad-mar-prim}; we leave the reader to make this precise. 
\item[(iii)] The notion of an entire solution of the abstract Cauchy problem $(ACP_{n}),$ introduced in \cite[Definition 1.1]{XL-entire}, is slightly different from the corresponding notion introduced in Definition \ref{RES}(ii). Strictly speaking, if $u(\cdot)$ is an entire solution of the abstract Cauchy problem $(ACP_{n})$ in the sense of Definition \ref{RES}(ii), then $u(\cdot)$ is an entire solution of problem $(ACP_{n})$ in the sense of \cite[Definition 1.1]{XL-entire}. The converse statement holds provided that for each index $i\in {\mathbb N}_{n-1}$ the initial values $u_{0},\cdot \cdot \cdot,u_{i-1}$ belong to $D(A_{i}).$  
\item[(iv)] The uniqueness of analytical solutions of problem [(\ref{snarky})-(\ref{puppy})] on the region ${\mathbb C} \setminus (-\infty,0]$ can be proved as follows. Let $u(\cdot)$ be an analytical solution of problem [(\ref{snarky})-(\ref{puppy})] on the region ${\mathbb C} \setminus (-\infty,0],$ with the initial values $u_{j},$ resp. $u_{j},\ v_{j},$ being zeroes ($0\leq j\leq q_{n}-1$). 
Then the choice of initial values in (\ref{puppy}) enables us to integrate the equation (\ref{snarky}) 
$(q_{n}\zeta)$-times by using the formula \cite[(1.21)]{bajlekova}. Keeping in mind the analyticity of $u(\cdot)$, we easily infer that for each $i\in  {\mathbb N}_{n-1}^{0}$ the
mappings $z\mapsto Bu(z),$ $z\in {\mathbb C} \setminus (-\infty,0)$ ($z\mapsto Bu(z),$ $z\in {\mathbb C} \setminus (-\infty,0]$) and $z\mapsto (g_{(q_{n}-q_{i})\zeta} \ast A_{i}u)(z),$ $z\in {\mathbb C} \setminus (-\infty,0)$ ($z\mapsto (g_{(q_{n}-q_{i})\zeta} \ast A_{i}u)(z),$ $z\in {\mathbb C} \setminus (-\infty,0]$) are well defined and continuous (analytical), as well as that
$$
Bu\bigl(te^{i\gamma}\bigr)+\sum \limits_{i=0}^{n-1}\int^{te^{i\gamma}}_{0}g_{(q_{n}-q_{i})\zeta}(s) A_{i}u\bigl( t e^{i\gamma}-s \bigr)\, ds=0,\quad t\geq 0,\ \gamma \in (-\pi,\pi),
$$
i.e., that
\begin{equation}\label{jaco}
Bu\bigl(te^{i\gamma}\bigr)+\sum \limits_{i=0}^{n-1}\bigl(e^{i\gamma}\bigr)^{(q_{n}-q_{i})\zeta}\int^{t}_{0}g_{(q_{n}-q_{i})\zeta}(t-s) A_{i}u\bigl( se^{i\gamma} \bigr)\, ds=0,\quad t\geq 0,\ \gamma \in (-\pi,\pi).
\end{equation}
It is clear that there exists $\gamma \in (-\pi,\pi)$ such that $2re^{-i\gamma \zeta}\in e^{i\phi}\Sigma_{(\zeta \pi/2)+\theta}.$ Setting  $\phi':=\gamma \zeta,$
$u_{\gamma}(t):=u(te^{i\gamma}),$ $t\geq 0,$ we obtain that $u_{\gamma} \in C([0,\infty) : X)$ and
$$
e^{-iq_{n}\phi'}Bu_{\gamma}(t)+\sum \limits^{n-1}_{i=0}e^{-iq_{i}\phi'}A_{i}\bigl(g_{(q_{n}-q_{i})\zeta}\ast u_{\gamma}\bigr)(t)=0,\quad t\geq 0.
$$
On the other hand,
$$
P_{(\lambda e^{-i\phi'})^{1/\zeta}}=\lambda^{q_{n}}e^{-iq_{n}\phi'}B+\sum \limits^{n-1}_{i=0}\lambda^{q_{i}}e^{-iq_{i}\phi'}A_{i},\quad \lambda \in {\mathbb C} \setminus \{0\}.
$$
Using the previous two equalities and \cite[Theorem 2.2]{publi} (applied to the operators $e^{-iq_{i}\phi'}A_{i}$ in place of the operators
$A_{i}$ appearing in the formulation of this theorem), we get that $u_{\gamma}(t)=0,$ $t\geq 0,$ which clearly implies that $u(z)=0,$ $z\in {\mathbb C} \setminus (-\infty,0).$
\end{itemize}
\end{rem}

It is also worth noting that Theorem \ref{GZA} is an extension of \cite[Theorem 2.1]{XL-entire} (cf. also \cite[Theorem 4.2, p. 168]{x263}), where it has been assumed that $B=C=I,$ $\zeta=1,$ $X$ is a Banach space and $\bigcap_{j=0}^{n}D(A_{j})$ is dense in $X$ (in our opinion, the strong continuity in (ii) is very important for the validity of Theorem \ref{GZA} and cannot be so simply neglected here (cf. \cite[(4.8), p. 169]{x263}); also, it ought to be observed that Lemma \ref{tuga-jesenja} is very important for 
filling some absences in the proof of \cite[Theorem 4.2]{x263}, appearing on the lines  1-6, p. 171 in \cite{x263}, where the Cauchy formula has been used by assuming the analyticity of mapping $\lambda \mapsto R_{e^{i\phi}\lambda},$ $\lambda \in \Sigma_{(\pi/2)+\theta},$ $|\lambda|>r$ a priori); observe also that, in the concrete situation of abstract Cauchy problem $(ACP_{n}),$ our estimate on the growth rate of $P_{e^{i\phi}\cdot}^{-1}$ (cf. the equation (\ref{818}) with $\nu'<-(n-1)$) is slightly better than the corresponding estimate \cite[(4.2)]{x263}, where it has been required that $\nu'\leq -n.$
If $B=C=I$ and $\zeta=2,$ then Theorem \ref{GZA} strengthens \cite[Theorem 2.1]{XL-entire} in a drastic manner. Speaking-matter-of-factly, our basic requirement in (i) is that the operator $P_{\lambda}^{-1}=(\lambda^{2n}+\lambda^{2n-2}A_{n-1}+\cdot \cdot \cdot +A_{0})^{-1}$ exists on the region $\{\lambda^{1/2} : \lambda \in  e^{i\phi}\Sigma_{(\zeta \pi/2)+\theta} : |\lambda|\geq r\},$ which can be contained in an arbitrary acute angle at vertex $(0,0);$ on the other hand, in the formulation of \cite[Theorem 2.1]{XL-entire}, T.-J. Xiao and J. Liang require the existence of operator $P_{\lambda}^{-1}$ for any complex number $\lambda$ having the modulus greater than or equal to $r$ and belonging to the obtuse angle 
$e^{i\phi}\Sigma_{(\pi/2)+\theta}$. 

In the following slight modification of \cite[Example 2]{vlad-mar-prim}, we will focus our attention on the possible applications of Theorem \ref{GZA} in which $C$ is not the identity operator on $X;$ 
this is a very important example because of its universality and covering a wide range of concrete applications (here, our attempt is to reconsider and relax, in a certain sense, the very restrictive condition \cite[(4.2), p. 168]{x263}).

\begin{example}\label{not-ultra-entire} 
Suppose that $0< \zeta\leq 2,$ $\theta \in (-\pi,\pi],$ $r>0,$ $q_{n}>q_{n-1},$ $\emptyset \neq \Omega \subseteq {\mathbb C},$ $N\in {\mathbb N},$ $A$ is a densely defined closed linear operator in $X$ satisfying that $\Omega \subseteq \rho(A)$ and the operator family $\{(1+|\lambda|)^{-N}(\lambda-A)^{-1} : \lambda \in \Omega \}\subseteq L(X)$ is equicontinuous (here we can also assume that the operator $A$ is not densely defined or that the operator family $\{(1+|\lambda|)^{-N}(\lambda-A)^{-1}C : \lambda \in \Omega \}\subseteq L(X)$ is equicontinuous, thus providing some applications of Theorem \ref{GZA} to the abstract degenerate fractional equations involving non-elliptic differential operators with the empty resolvent set; see \cite{quan-miao}). Assume, further, that $P_{i}(z)$ is a complex polynomial ($i\in {\mathbb N}_{n}^{0}$), $P_{n}(z)\not \equiv 0,$ 
$\lambda_{0}\in \rho(A) \setminus \{z\in {\mathbb C} : P_{n}(z)=0\},$ dist$(\lambda_{0},\Omega)>0,$
as well as that 
for each $\lambda \in e^{i\phi}\Sigma_{(\zeta \pi/2)+\theta}$ with $|\lambda|\geq r$ all roots of the polynomial
$$
z\mapsto \lambda^{q_{n}}P_{n}(z)+\sum_{i=0}^{n-1}\lambda^{q_{i}}P_{i}(z),\quad z\in {\mathbb C}
$$
belong to $\Omega.$ Set $B:=P_{n}(A)$ and $A_{i}:=P_{i}(A)$ ($i\in {\mathbb N}_{n-1}^{0}$).
Then it can be proved that there exist two sufficiently large numbers $Q'\geq N+2,$ $Q'\in {\mathbb N}$ and $r'>r$ such that the operator families
$\{(1+|\lambda|)^{q_{n}}(\lambda^{q_{n}} P_{n}(A)+\sum_{i=0}^{n-1}\lambda^{q_{i}}P_{i}(A))^{-1}(\lambda_{0}-A)^{-Q'} :\lambda \in e^{i\phi}\Sigma_{(\zeta \pi/2)+\theta},\ |\lambda|\geq r' \}\subseteq L(X)$ and $\{(1+|\lambda|)^{q_{n}}P_{j}(A)(\lambda^{q_{n}} P_{n}(A)+\sum_{i=0}^{n-1}\lambda^{q_{i}}P_{i}(A))^{-1}(\lambda_{0}-A)^{-Q'} :\lambda \in e^{i\phi}\Sigma_{(\zeta \pi/2)+\theta},\ |\lambda|\geq r \}\subseteq L(X)$ are both equicontinuous and strongly continuous ($j\in {\mathbb N}_{n-1}^{0}$).
Hence, Theorem \ref{GZA} can be applied with the regularizing operator $C\equiv (\lambda_{0}-A)^{-Q'}.$ 
\end{example}

In the following theorem, we will reconsider the assertion of \cite[Theorem 2.3.3]{knjigaho} for systems of abstract degenerate differential equations.

\begin{thm}\label{kragujevac}
Let $(X,\|\cdot \|)$ be a complex Banach space and let $iA_{j},\
1\leq j\leq n$ be commuting generators of bounded $C_{0}$-groups on
$X.$ Suppose $\alpha>0,$ $d\in {\mathbb N}$ and $P_{i}(x)= \sum_{|\eta|\leq
d}P_{\eta,i}x^{\eta}$ ($P_{\eta,i}\in {\mathbb C}^{m,m},$ $x\in {\mathbb R}^{n},$ $i=1,2$) are
two given polynomial matrices. Suppose that for each $x\in {\mathbb R}^{n}$ the matrix $P_{2}(x)$ is regular. Then there exists a
dense subset $X_{\alpha,m}$ of $X^{m}$ such that, for every $\vec{x}\in X_{\alpha,m},$ there exists a unique solution (defined in the very obvious way) of the following abstract Cauchy problem: 
\[\hbox{(DFP)}: \left\{
\begin{array}{l}
{\mathbf D}_{t}^{\alpha}\overline{P_{2}(A)}\vec{u}(t)=\overline{P_{2}(A)}{\mathbf D}_{t}^{\alpha}\vec{u}(t)=\overline{P_{1}(A)}\vec{u}(t),\quad t\geq 0,\\
\vec{u}(0)=\vec{x};\quad \vec{u}^{(j)}(0)=0,\ 1\leq j \leq \lceil \alpha \rceil -1.
\end{array}
\right.
\]
Furthermore,  for every $\vec{x}\in X_{\alpha,m},$ the mapping $t\mapsto \vec{u}(t),$ $t\geq 0$ can be extended to the whole complex plane (the extension of this mapping will be denoted by the same symbol in the sequel), and the following holds:
\begin{itemize}
\item[(i)] The mapping $z\mapsto \vec{u}(z),$ $z\in {\mathbb C} \setminus (-\infty,0]$ is analytic.
\item[(ii)] The mapping $z\mapsto \vec{u}(z),$ $z\in {\mathbb C}$ is entire provided that $\alpha \in {\mathbb N}.$
\end{itemize}
\end{thm}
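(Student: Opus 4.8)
The plan is to produce the solution by applying the bounded functional calculus for the commuting system $A=(A_{1},\dots,A_{n})$ to the ``regularized'' Mittag--Leffler matrix of the symbol $Q(x):=P_{2}(x)^{-1}P_{1}(x)$, $x\in\mathbb{R}^{n}$. Since $\det P_{2}(x)\neq0$ for every $x\in\mathbb{R}^{n}$, the entries of $Q$ are rational functions whose poles avoid $\mathbb{R}^{n}$, so $Q$ is real-analytic (in particular $C^{\infty}$) on $\mathbb{R}^{n}$, and for each fixed $x$ the map $\mu\mapsto E_{\alpha}(\mu Q(x))=\sum_{k\geq0}\mu^{k}Q(x)^{k}/\Gamma(\alpha k+1)$ is entire. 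As the dense set I would take $X_{\alpha,m}:=\operatorname{span}\{\phi(A)\vec{x}:\phi\in\mathcal{D}(\mathbb{R}^{n}),\ \vec{x}\in X^{m}\}$, with $\phi(A)$ acting coordinate-wise on $X^{m}$; density in $X^{m}$ follows from $\overline{X_{0}}=X$ together with the existence of a sequence $\psi_{k}\in\mathcal{D}(\mathbb{R}^{n})$ with $\psi_{k}(A)\to I$ strongly (an approximate-identity argument for $\mathcal{F}^{-1}\psi_{k}$). For $\vec{x}=\phi(A)\vec{y}$ with $\phi\in\mathcal{D}(\mathbb{R}^{n})$, I define
\[
\vec{u}(t):=\bigl[\phi(\cdot)\,E_{\alpha}\bigl(t^{\alpha}Q(\cdot)\bigr)\bigr](A)\,\vec{y},\qquad t\geq0 .
\]
Because $\phi(\cdot)E_{\alpha}(t^{\alpha}Q(\cdot))$ is $C^{\infty}$ with support inside the compact set $\operatorname{supp}\phi$, it belongs entrywise to $\mathcal{F}L^{1}(\mathbb{R}^{n})$, so the matrix analogue of the bound $\|u(A)\|\leq M\|\mathcal{F}^{-1}u\|_{L^{1}}$ shows that $\vec{u}(t)\in X^{m}$ and, with $E_{\alpha}(0)=I_{m}$, that $\vec{u}(0)=\phi(A)\vec{y}=\vec{x}$.

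To check that $\vec{u}$ solves (DFP) I would argue first ``on the level of symbols''. From the series together with $\mathbf{D}_{t}^{\alpha}g_{\alpha k+1}=g_{\alpha k+1-\alpha}$ for $k\geq1$ and $\mathbf{D}_{t}^{\alpha}g_{1}=0$ (cf. \cite[(1.21),(1.25)]{bajlekova}) one gets $\mathbf{D}_{t}^{\alpha}E_{\alpha}(t^{\alpha}M)=ME_{\alpha}(t^{\alpha}M)=E_{\alpha}(t^{\alpha}M)M$ for $M\in\mathbb{C}^{m,m}$, while $(d/dt)^{j}E_{\alpha}(t^{\alpha}M)|_{t=0}=0$ for $1\leq j\leq\lceil\alpha\rceil-1$ (each $k\geq1$ term carries a factor $t^{\alpha k-j}\to0$, since $j<\alpha$). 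As $P_{2}(x)Q(x)=P_{1}(x)$, this yields the pointwise identities $\mathbf{D}_{t}^{\alpha}[\phi(x)P_{2}(x)E_{\alpha}(t^{\alpha}Q(x))]=\phi(x)P_{1}(x)E_{\alpha}(t^{\alpha}Q(x))=P_{2}(x)\,\mathbf{D}_{t}^{\alpha}[\phi(x)E_{\alpha}(t^{\alpha}Q(x))]$, together with the correct initial values. To lift these to $X^{m}$ I would interchange $\mathbf{D}_{t}^{\alpha}$ and the closed operators $\overline{P_{1}(A)},\overline{P_{2}(A)}$ with the Bochner integral $\int_{\mathbb{R}^{n}}\mathcal{F}^{-1}[\,\cdot\,](\xi)e^{-i(\xi,A)}\vec{y}\,d\xi$: the interchange with $\overline{P_{i}(A)}$ is the matrix form of property ($\triangleright$), namely $\psi(A)\overline{P(A)}\subseteq\overline{P(A)}\psi(A)=(P\psi)(A)$, used with the compactly supported smooth symbols above, and the interchange with $\mathbf{D}_{t}^{\alpha}$ is dominated convergence, using $\sup_{\xi}\|e^{-i(\xi,A)}\|<\infty$ and the fact that compactness of $\operatorname{supp}\phi$ makes all the relevant $L^{1}(\mathbb{R}^{n}_{\xi})$-norms locally bounded in $t$. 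This gives $\mathbf{D}_{t}^{\alpha}\overline{P_{2}(A)}\vec{u}(t)=\overline{P_{2}(A)}\mathbf{D}_{t}^{\alpha}\vec{u}(t)=\overline{P_{1}(A)}\vec{u}(t)$ with $\vec{u}(0)=\vec{x}$, $\vec{u}^{(j)}(0)=0$ and continuity of all terms involved, so $\vec{u}$ is a solution in the required sense.

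For the analyticity assertions, $t\mapsto t^{\alpha}$ extends to a map analytic on $\mathbb{C}\setminus(-\infty,0]$ (entire when $\alpha\in\mathbb{N}$) and $\mu\mapsto E_{\alpha}(\mu M)$ is entire, so $z\mapsto\phi(\cdot)E_{\alpha}(z^{\alpha}Q(\cdot))$ extends to a map with values in the $\mathcal{F}L^{1}(\mathbb{R}^{n})$-matrices which is analytic on $\mathbb{C}\setminus(-\infty,0]$ (entire when $\alpha\in\mathbb{N}$) with $L^{1}$-norms locally bounded; differentiating under the Bochner integral (or invoking Morera) then shows that $z\mapsto\vec{u}(z)$ extends to $\mathbb{C}$ with the properties (i) and (ii).

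It remains to prove uniqueness, and I expect this to be the main obstacle. A solution $\vec{v}$ of (DFP) with $\vec{x}=0$ satisfies, after $\lceil\alpha\rceil$-fold integration (using \cite[(1.21)]{bajlekova} and the vanishing initial data), the Volterra equation $\overline{P_{2}(A)}\vec{v}(t)=\overline{P_{1}(A)}(g_{\alpha}\ast\vec{v})(t)$, $t\geq0$. Applying the bounded operators $[\psi P_{2}^{-1}](A)$ for $\psi\in\mathcal{D}(\mathbb{R}^{n})$ (note that $\psi P_{2}^{-1}$ is $C^{\infty}$ with compact support), using ($\triangleright$) and the multiplicativity of the functional calculus on $\mathcal{F}L^{1}$, one localizes the equation, and, running this through an exhausting sequence of cut-offs with $\psi_{k}(A)\to I$ strongly, one reduces — exactly as in the proof of \cite[Theorem 2.3.3]{knjigaho} — to the uniqueness for a Volterra equation of the second kind with a bounded operator, whence $\vec{v}\equiv0$. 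The delicate point here is the bookkeeping of this cut-off hierarchy, since $Q$ is merely a rational symbol and may be unbounded near the zero set of $\det P_{2}$ at infinity; a secondary technical nuisance, already present in the existence part, is the justification of the interchange of the Caputo derivative with the integral $\int_{\mathbb{R}^{n}}(\cdot)\,d\xi$ when $\alpha\notin\mathbb{N}$, the integrand being then only $\lceil\alpha\rceil$ times $t$-differentiable.
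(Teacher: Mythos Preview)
Your existence construction is essentially sound and takes a genuinely different route from the paper. The paper does \emph{not} use compactly supported cut-offs; it fixes a single injective Gaussian-type regularizer $C:=(e^{-a|x|^{k'd}})(A)$, with $k'$ chosen large enough (depending on $\alpha,d,m,n$) so that the series $\sum_{l}z^{\alpha l}\Gamma(\alpha l+1)^{-1}\bigl[(\det P_{2})^{-l}R_{ij;l}(x)e^{-a|x|^{k'd}}\bigr](A)$ converges in $L(X)$, and puts $X_{\alpha,m}:=(R(C))^{m}$. Your choice of $\phi\in\mathcal{D}(\mathbb{R}^{n})$ avoids the calibration of $k'$ entirely, at the price that each $\phi(A)$ is non-injective; this is fine for existence and analyticity but is precisely what makes your uniqueness sketch break down.

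The gap is in uniqueness. After localizing the integrated equation with $[\psi P_{2}^{-1}](A)$ you obtain $\psi(A)\vec{v}(t)=[\psi Q](A)\bigl(g_{\alpha}\ast\vec{v}\bigr)(t)$, which is \emph{not} a Volterra equation of the second kind for $\psi(A)\vec{v}$: the right-hand side involves $\vec{v}$, not $\psi(A)\vec{v}$, and there is no way to close this up by passing to an exhausting sequence $\psi_{k}\uparrow 1$ because the norms $\|[\psi_{k}Q](A)\|$ blow up with $k$. The reference to \cite[Theorem 2.3.3]{knjigaho} does not help here, since that argument is for the non-degenerate case $P_{2}\equiv I$. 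What the paper actually does is: (a) prove directly that $\overline{P_{2}(A)}$ is injective, by reducing via \cite[Lemma 1.1(a)]{zheng-pacific} to $X_{0}^{m}$ and using $(\triangleright)$ together with the injectivity of $(\det P_{2}(x))(A)$; and (b) exploit that its solution family $W_{\alpha}$ satisfies the resolvent identity $\overline{P_{2}(A)}W_{\alpha}(t)-\overline{P_{2}(A)}C_{m}=\overline{P_{1}(A)}(g_{\alpha}\ast W_{\alpha})(t)$, so that the convolution argument of \cite[Proposition 1.1]{prus} yields $(\overline{P_{2}(A)}C_{m}\ast\vec{v})(t)=0$, whence $\vec{v}\equiv0$ by injectivity of $\overline{P_{2}(A)}$ and of $C$. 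Your framework can be repaired along the same lines --- run the Pr\"uss identity with $\phi(A)I_{m}$ in place of $C_{m}$ to get $\overline{P_{2}(A)}(\phi(A)\ast\vec{v})=0$, invoke the injectivity of $\overline{P_{2}(A)}$, and only then let $\phi(A)\to I$ --- but the key ingredient you are missing is precisely step (a).
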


\begin{proof}
Let us recall that $k=1+\lfloor n/2 \rfloor.$ 
Suppose that $P_{1}(x)=[p_{ij;1}(x)]_{1\leq i,j\leq m}$ and $P_{2}(x)=[p_{ij;2}(x)]_{1\leq i,j\leq m}$ ($x\in {\mathbb R}^{n}$), and $d$ is the maximal degree of all non-zero polynomials $p_{ij;1}(x)$
and $p_{ij;2}(x)$ ($1\leq i,j\leq m$).
Then $\sup_{x\in {\mathbb R}^{n}}|\mbox{det}(P_{2}(x))|^{-1}<\infty$ and
we can inductively prove that there exist numbers $M_{1} \geq 1$ and $M_{2}\geq 1$ such that 
for each $l\in {\mathbb N}_{0}$ there exist polynomials $R_{ij;l}(x)$ ($1\leq i,j\leq m$) of degree $\leq lmd$ satisfying that
$$
\Bigl( P_{2}(x)^{-1}P_{1}(x) \Bigr)^{l}=\frac{\bigl[ R_{ij;l}(x) \bigr]_{1\leq i,j\leq m}}{\bigl(\mbox{det}(P_{2}(x))\bigr)^{l}},\quad x\in {\mathbb R}^{n}
$$
and that the following holds:
\begin{align}
\notag \Biggl|  D^{\eta}\Biggl( & \frac{R_{ij;l}(x)}{\bigl(\mbox{det}(P_{2}(x))\bigr)^{l}}\Biggr) \Biggr| +
\Biggl|  D^{\eta}\Biggl( p_{ij;1}(x) \frac{R_{ij;l}(x)}{\bigl(\mbox{det}(P_{2}(x))\bigr)^{l}}\Biggr) \Biggr| 
\\\label{larry} &+
\Biggl|  D^{\eta}\Biggl(p_{ij;2}(x) \frac{R_{ij;l}(x)}{\bigl(\mbox{det}(P_{2}(x))\bigr)^{l}}\Biggr) \Biggr| 
\leq M_{1}^{l}(1+|x|)^{lmdM_{2}},
\end{align}
provided $l\in {\mathbb N}_{0},\ x\in {\mathbb R}^{n},\ 0\leq |\eta|\leq k$ and $ 1\leq i,j\leq m.$
It is very simple to prove that there exists a sufficiently large natural number $k'$ satisfying $2 | k'$ and
\begin{equation}\label{entire-even}
\lim \limits_{l\rightarrow +\infty}\frac{\Bigl(\Gamma\bigl(\frac{2M_{2}(l+1)md+n}{k'd}\bigr)\Bigr)^{1/2l}}{\bigl(\Gamma(\alpha l+1)\bigr)^{1/l}}=0.
\end{equation}
Let $a>0$ be fixed. Set
$C:=(e^{-a|x|^{k'd}})(A)$ and $X_{\alpha,m}:=(R(C))^{m}.$ Then $C\in L(X),$ $C$ is injective and
$D_{\infty}(A_{1}^{2}+\cdot \cdot \cdot +A_{n}^{2})\supseteq R(C) $
is dense in $X$ (\cite{l1}). Define
\begin{equation}\label{WR}
W_{\alpha}(z):=\Biggl[\sum \limits_{l=0}^{\infty}\frac{z^{\alpha
l}}{\Gamma(\alpha l+1)} \Biggl(\frac{R_{ij;l}(x)}{\bigl(\mbox{det}(P_{2}(x))\bigr)^{l}}e^{-a|x|^{k'd}}\Biggr)(A)\Biggr]_{1\leq i,j\leq m},\ z\in {\mathbb C}.
\end{equation}
Using (\ref{larry})-(\ref{entire-even}) and the proof of \cite[Theorem 2.3.3]{knjigaho}, it readily follows that $W_{\alpha}(z)\in L(X^{m})$ for all $z\in {\mathbb C}$, as well as that the expressions
$$
\Biggl[\sum \limits_{l=0}^{\infty}\sum_{v=1}^{m}\frac{z^{\alpha
l}}{\Gamma(\alpha l+1)} \Biggl(p_{iv;2}(x)\frac{R_{vj;l+1}(x)}{\bigl(\mbox{det}(P_{2}(x))\bigr)^{l+1}}e^{-a|x|^{k'd}}\Biggr)(A)\Biggr]_{1\leq i,j\leq m}
$$
and
$$
\Biggl[\sum \limits_{l=0}^{\infty}\sum_{v=1}^{m}\frac{z^{\alpha
l}}{\Gamma(\alpha l+1)} \Biggl(p_{iv;1}(x)\frac{R_{vj;l}(x)}{\bigl(\mbox{det}(P_{2}(x))\bigr)^{l}}e^{-a|x|^{k'd}}\Biggr)(A)\Biggr]_{1\leq i,j\leq m}
$$
define the bounded linear operators on $X^{m}$ ($z\in {\mathbb C}$). Furthermore,
the mapping $z \mapsto W_{\alpha}(z),$ $z\in {\mathbb C} \setminus (-\infty,0]$ is analytic, and
the mapping $z\mapsto W_{\alpha}(z),$ $z\in {\mathbb C}$ is entire provided that $\alpha \in {\mathbb N}.$
Suppose now $\vec{x}\in X_{\alpha,m}.$ Then there exists $\vec{x'}\in X^{m}$ such that $\vec{x}=C_{m}\vec{x'},$ where $C_{m}=CI_{m}.$ Setting $\vec{u}(z):=W_{\alpha}(z)\vec{x'},$ $z\in {\mathbb C},$ we immediately obtain that  
(i) and (ii) hold. It is not difficult to prove that ${\mathbf D}_{t}^{\alpha}(t)\vec{u}(t)$ is well-defined, as well as that
$$
{\mathbf D}_{t}^{\alpha}(t)\vec{u}(t)=\Biggl[\sum \limits_{l=1}^{\infty}\frac{t^{\alpha (
l-1)}}{\Gamma(\alpha (l-1)+1)} \Biggl(\frac{R_{ij;l}(x)}{\bigl(\mbox{det}(P_{2}(x))\bigr)^{l}}e^{-a|x|^{k'd}}\Biggr)(A)\Biggr]_{1\leq i,j\leq m}\vec{x'},\quad t\geq 0,
$$
and $\vec{u}(0)=\vec{x},\ \vec{u}^{(j)}(0)=0,\ 1\leq j \leq \lceil \alpha \rceil -1.$
Since $\overline{P_{1}(A)}$
and $\overline{P_{2}(A)}$ are closed, we can prove with the help of ($\triangleright$) 
that $\vec{u}(t) \in D(\overline{P_{1}(A)}) \cap D(\overline{P_{2}(A)}),$
$
{\mathbf D}_{t}^{\alpha}\vec{u}(t) \in D(\overline{P_{2}(A)}),$ the term ${\mathbf D}_{t}^{\alpha}\overline{P_{2}(A)}\vec{u}(t)$ is well defined, with
\begin{align*}
& \overline{P_{2}(A)}{\mathbf D}_{t}^{\alpha}\vec{u}(t)={\mathbf D}_{t}^{\alpha}\overline{P_{2}(A)}\vec{u}(t)
\\ & =\Biggl[\sum \limits_{l=0}^{\infty}\sum_{v=1}^{m}\frac{z^{\alpha
l}}{\Gamma(\alpha l+1)} \Biggl(p_{iv;2}(x)\frac{R_{iv;l+1}(x)}{\bigl(\mbox{det}(P_{2}(x))\bigr)^{l+1}}e^{-a|x|^{k'd}}\Biggr)(A)\Biggr]_{1\leq i,j\leq m}\vec{x'}
\end{align*}
and
$$
\overline{P_{1}(A)}\vec{u}(t)=\Biggl[\sum \limits_{l=0}^{\infty}\sum_{v=1}^{m}\frac{z^{\alpha
l}}{\Gamma(\alpha l+1)} \Biggl(p_{iv;1}(x)\frac{R_{iv;l}(x)}{\bigl(\mbox{det}(P_{2}(x))\bigr)^{l}}e^{-a|x|^{k'd}}\Biggr)(A)\Biggr]_{1\leq i,j\leq m}\vec{x'},
$$
for any $t\geq 0.$ Since 
$$
P_{2}(x)\frac{\bigl[ R_{ij;l+1}(x) \bigr]_{1\leq i,j\leq m}}{\bigl(\mbox{det}(P_{2}(x))\bigr)^{l+1}}=P_{1}(x)\frac{\bigl[ R_{ij;l}(x) \bigr]_{1\leq i,j\leq m}}{\bigl(\mbox{det}(P_{2}(x))\bigr)^{l}},\quad l\in {\mathbb N}_{0},\ x\in {\mathbb R}^{n},
$$
a simple matricial computation shows that the function $t\mapsto \vec{u}(t),$ $t\geq 0$ is a solution of problem (DFP). Now we will prove the uniqueness of solutions of problem (DFP). Let $t\mapsto \vec{u}(t),$ $t\geq 0$ be a solution of (DFP) with $\vec{x}=0.$
Integrating $\alpha$-times (DFP), we get that $\overline{P_{2}(A)}\vec{u}(t)=\int^{t}_{0}g_{\alpha}(t-s)\overline{P_{1}(A)}\vec{u}(s)\, ds,$ $t\geq 0.$ 
Using this equality, as well as the fact that $\overline{P_{2}(A)}W_{\alpha}(t)-\overline{P_{2}(A)}C_{m}=\overline{P_{1}(A)}(g_{\alpha} \ast W_{\alpha}(\cdot))(t)\in L(X^{m}),$ $t\geq 0,$ and the proof of \cite[Proposition 1.1]{prus}, we obtain that $0=(W_{\alpha}\ast 0)(t)=(\overline{P_{2}(A)}C_{m}\ast \vec{u})(t),$ $t\geq 0,$ so that it suffices to prove that the operator $\overline{P_{2}(A)}$ is injective. Suppose that $\overline{P_{2}(A)}\vec{x}=\vec{0}$
for some $\vec{x}\in X^{m}.$ 
By \cite[Lemma 1.1(a)]{zheng-pacific}, we may assume without loss of generality that $\vec{x}\in X_{0}^{m}$ (cf. ($\triangleright$)).   
It is clear that there exist polynomials $q_{ij}(x)$ ($1\leq i,j\leq m$) such that $P_{2}(x)^{-1}=(\mbox{det}(P_{2}(x)))^{-1}[q_{ij}(x)]_{1\leq i,j\leq m}.$ Using ($\triangleright$), one can prove that $[(\mbox{det}(P_{2}(x)))(A)I_{m}][\phi(A)I_{m}]\vec{x}=[(\phi(x)q_{ij}(x))(A)]_{1\leq i,j\leq m}P_{2}(A)\vec{x}=\vec{0},$ $\phi \in {\mathcal S}({\mathbb R}^{n}).$
By \cite[Remark 4.4(i)]{filomat}, the operator $(\mbox{det}(P_{2}(x)))(A)$ is injective, whence we may conclude that 
$[\phi(A)I_{m}]\vec{x}=\vec{0},$ $\phi \in {\mathcal S}({\mathbb R}^{n}).$ This, in turn, implies $\vec{x}=\vec{0}$ and completes the proof of theorem.
\end{proof}

\begin{rem}\label{denseD}
\begin{itemize}
\item[(i)] 
It can be simply proved that for each $\vec{x}\in X_{\alpha,m}$ the solution $t\mapsto \vec{u}(t),$ $t\geq 0$ possesses some expected properties from Definition \ref{RES}: If $\alpha \notin {\mathbb N},$ then the terms $({\mathbf D}_{t}^{\alpha})^{p}\vec{u}(t),$  $\overline{P_{1}(A)}({\mathbf D}_{t}^{\alpha})^{p}\vec{u}(t)$ and $\overline{P_{2}(A)}({\mathbf D}_{t}^{\alpha})^{p}\vec{u}(t)$ 
are well-defined and can be extended to the whole complex plane, analytically on the region ${\mathbb C} \setminus (-\infty,0]$ and continuously on the region ${\mathbb C} \setminus (-\infty,0);$ if $\alpha \in {\mathbb N},$ then the terms $(d^{\alpha p}/dt^{\alpha p})\vec{u}(t),$  $\overline{P_{1}(A)}(d^{\alpha p}/dt^{\alpha p})\vec{u}(t)$ and $\overline{P_{2}(A)}(d^{\alpha p}/dt^{\alpha p})\vec{u}(t)$ 
are well-defined and can be entirely extended to the whole complex plane ($p\in {\mathbb N}_{0}$).
The assertion of \cite[Theorem 2.3.5]{knjigaho} can be reformulated in degenerate case, as well. 
\item[(ii)]
If $m=1,$ $P_{1}(x)=\sum_{|\alpha|\leq d}a_{\alpha}x^{\alpha},$ $P_{2}(x)=\sum_{|\alpha|\leq d}b_{\alpha}x^{\alpha},$
$x\in {{\mathbb R}^{n}}$ ($a_{\alpha},\ b_{\alpha} \in {\mathbb C}$), 
 $P_{2}(x)\neq 0,$ $x\in {{\mathbb R}^{n}}$ and
$X$ is a
function space on which translations are uniformly bounded and
strongly continuous (for example, $L^{p}({\mathbb R}^{n})$ with
$p\in [1,\infty),$ $C_{0}({\mathbb R}^{n})$ or $BUC({\mathbb
R}^{n});$ notice also that $X$ can be consisted of functions defined
on some bounded domain \cite{l1}, \cite{quan-miao}), then
the obvious choice for $A_{j}$ is $i\partial/\partial x_{j}$ ($1\leq
j\leq n$). In this case, $\overline{P_{1}(A)}$ and $\overline{P_{2}(A)}$ are just the operators
$\sum_{|\alpha|\leq d}a_{\alpha}i^{|\alpha|}(\partial/\partial
x)^{\alpha}$ and $\sum_{|\alpha|\leq d}b_{\alpha}i^{|\alpha|}(\partial/\partial
x)^{\alpha},$ respectively, acting with their maximal distributional domains. Making use of Theorem
\ref{kragujevac} and a slight modification of the formula appearing on l. 1, p. 49 of \cite{knjigaho}, we can prove that for each $\alpha>0$ there exists a
dense subset $X_{\alpha,1}$ of $L^{p}({\mathbb R}^{n})$ such that
the abstract Cauchy problem:
\begin{align*}
\sum_{|\alpha|\leq
d}b_{\alpha}i^{|\alpha|} (\partial/\partial x)^{\alpha}{\mathbf D}_{t}^{\alpha}u(t,x) &=
{\mathbf D}_{t}^{\alpha}\sum_{|\alpha|\leq
d}b_{\alpha}i^{|\alpha|}(\partial/\partial x)^{\alpha}u(t,x)
\\ & =\sum_{|\alpha|\leq
d}a_{\alpha}i^{|\alpha|}(\partial/\partial x)^{\alpha}u(t,x),\
t>0,\ x\in {{\mathbb R}^{n}};
\end{align*}
$$
\frac{\partial^{l}}{\partial t^{l}}u(t,x)_{|t=0}=f_{l}(x),\ \ x\in {{\mathbb R}^{n}},\ l=0,1,\cdot \cdot \cdot, \lceil
\alpha \rceil -1,
$$
has a unique solution (obeying the properties clarified in the part (i) of this remark) provided $f_{l}(\cdot) \in X_{\alpha,1},$
$l=0,1,\cdot \cdot \cdot, \lceil \alpha \rceil -1.$ A similar
assertion can be formulated in $X_{l}$-type
spaces (\cite{x263}); we can also move to the spaces $L^{\infty}({\mathbb
R}^{n}),$ $C_{b}({\mathbb R}^{n})$ or $C^{\sigma}({\mathbb R}^{n})$
($0<\sigma<1$) by using distributional techniques, but then we cannot expect the density of the corresponding subspace $X_{\alpha,1}.$
\item[(iii)] In \cite{filomat}-\cite{publi}, we have recently considered 
the $C$-wellposedness of the abstract degenerate Cauchy problem
\[\hbox{(DFP)}: \left\{
\begin{array}{l}
{\mathbf D}_{t}^{\alpha}\overline{P_{2}(A)}u(t)=\overline{P_{2}(A)}{\mathbf D}_{t}^{\alpha}u(t)=\overline{P_{1}(A)}u(t),\quad t\geq 0,\\
u(0)=Cx;\quad u^{(j)}(0)=0,\ 1\leq j \leq \lceil \alpha \rceil -1,
\end{array}
\right.
\]
where $0<\alpha \leq 2,$ $P_{1}(x)$ and $P_{2}(x)$ are complex polynomials, $P_{2}(x)\neq 0,$ $x\in {\mathbb R}^{n},$ $iA_{j},\ 1\leq j\leq n$ are commuting
generators of bounded $C_{0}$-groups on a Banach space $X,$ thus continuing the research studies of T.-J. Xiao-J. Liang \cite{XL}-\cite{XL-HIGHER}. Denote $\Omega(\omega)=\{ \lambda^{2}
:  \Re \lambda >\omega\},$ if $\omega
>0,$ and $\Omega(\omega)={\mathbb C} \ \setminus \ (-\infty,-\omega^{2}],$ if $\omega \leq 0.$ The basic assumption in \cite{filomat}-\cite{publi} was that 
$$
\sup_{x\in {\mathbb R}^{n}}\Re \Biggl(\Biggl(\frac{P_{1}(x)}{P_{2}(x)}\Biggr)^{1/\alpha}\Biggr)\leq
\omega,
$$
provided $0<\alpha<2,$ and $P_{1}(x)/P_{2}(x) \notin \Omega(\omega),$ $x\in {{\mathbb R}^{n}},$ provided $\alpha=2.$ Observe that our results from the part (i) of this remark can be applied
in the analysis of problem $\hbox{(DFP)}$ in the general case $\alpha>0,$ and also in the case that $0<\alpha \leq 2$ and the above-mentioned requirements are not satisfied.
\end{itemize}
\end{rem}

\subsection{Final conclusions and remarks}
We feel duty bound to say that Theorem \ref{kragujevac} and the conclusions from the parts of (ii) and (iii) of Remark \ref{denseD} cannot be applied in the analysis of $L^{p}$-wellposedness of a great number of very important degenerate equations of mathematical physics, like (cf. the monograph by G. V. Demidenko-S. V. Uspenskii \cite{dem} for further information):
\begin{itemize}
\item[(a)] (The Rossby wave equation, 1939)
$$
\Delta u_{t}+\beta u_{y}=0 \ \ (n=2), \ \ u(0,x,y)=u_{0}(x,y);
$$
\item[(b)] (The Sobolev equation, 1940)
\begin{align*}
\Delta u_{tt}&+\omega^{2} u_{zz}=0 \ \ (n=3),\\ & u(0,x,y,z)=u_{0}(x,y,z),\ u_{t}(0,x,y,z)=u_{1}(x,y,z),
\end{align*}
here $\omega/2$ is the angular velocity;
\item[(c)] (The internal wave equation in the Boussinesq approximation, 1903)
\begin{align*}
\Delta u_{tt}& +N^{2} \bigl( u_{xx}+u_{yy} \bigr)=0 \ \ (n=3),\\ & u(0,x,y,z)=u_{0}(x,y,z),\ u_{t}(0,x,y,z)=u_{1}(x,y,z);
\end{align*}
\item[(d)] (The gravity-gyroscopic wave equation, cf. \cite{gabov})
\begin{align*}
\Delta u_{tt}& +N^{2}\bigl( u_{xx}+u_{yy} \bigr) +\omega^{2}u_{zz}=0 \ \ (n=3),\\ & u(0,x,y,z)=u_{0}(x,y,z),\ u_{t}(0,x,y,z)=u_{1}(x,y,z);
\end{align*}
\item[(e)] (Small amplitude oscillations of a rotating viscous fluid)
\begin{align*}
\Delta u_{tt}& -2\nu \Delta^{2}u_{t}+v^{2}\Delta^{3}u+\omega^{2}u_{zz}=0 \ \ (n=3),\\ & u(0,x,y,z)=u_{0}(x,y,z),\ u_{t}(0,x,y,z)=u_{1}(x,y,z).
\end{align*}
Here  $\omega/2$ is the angular velocity and $\nu >0$ is the viscosity coefficient.
\end{itemize}

Before including some details on the existence and uniqueness of entire solutions of the equations (a)-(e) in $L^{p}$ spaces, we need to explain how one can reformulate
the assertion of Theorem \ref{kragujevac} in the case that there exist a vector $x_{0}\in {\mathbb R}^{n}$ and a non-empty compact subset $K$ of ${\mathbb R}^{n}$ 
such that the matrix $P_{2}(x_{0})$ is singular and $\{ x\in {\mathbb R}^{n} : \mbox{det}(P_{2}(x))=0  \}\subseteq K;$ the analysis of some fractional analogons of (a)-(e) can be carry out similarly and is therefore omitted. Denote by ${\mathcal A}$ the class consisting of those $C^{\infty}({\mathbb R}^{n})$-functions $\phi(\cdot)$ satisfying that there exist two open relatively compact neighborhoods $\Omega$ and $\Omega'$ of $K$ in ${\mathbb R}^{n}$ such that $\phi(x)=0$ for all $x\in \Omega$ and $\phi(x)=1$ for all $x \in {\mathbb R}^{n} \setminus \Omega'.$ Since the estimate (\ref{larry}) holds for all $x \in {\mathbb R}^{n} \setminus \Omega , $ we can define for each $z\in {\mathbb C}$ the matricial operator $W_{\alpha}(z)$ (cf. the proof of Theorem \ref{kragujevac}) by replacing the function $e^{-a|x|^{k'd}}$ in (\ref{WR}) with the function
$\phi(x)e^{-a|x|^{k'd}}.$ Setting $C_{\phi}:=(\phi(x) e^{-a|x|^{k'd}})(A)$ for $\phi \in {\mathcal A}$ (then we do not know any longer whether the set $\bigcup_{\phi \in {\mathcal A}}R(C_{\phi})$ is dense in $X,$ and we cannot clarify whether the operator $C_{\phi}$ is injective or not) and $X_{\alpha,m}':=(\bigcup_{\phi \in {\mathcal A}}R(C_{\phi}))^{m},$ and assuming additionally
the injectivity of matricial operator $\overline{P_{2}(A)}$ on $X^{m},$ then for each $\vec{x}\in X_{\alpha,m}'$ there exists a unique solution $t\mapsto \vec{u}(t),$ $t\geq 0$ of the abstract Cauchy problem (DFP), which can be extended to the whole complex plane, and (i)-(ii) from the formulation of Theorem \ref{kragujevac} continues to hold. Rewriting any of the equations (a)-(e) in the matricial form, and using Lemma \ref{lap-inj}, we obtain that there exists a non-trivial subspace $X_{1,1}'$ of $L^{p}({\mathbb R}^{2}),$ resp. $X_{1,2}'$ of $L^{p}({\mathbb R}^{3}) \times L^{p}({\mathbb R}^{3}),$ such that the equation (a), resp. any of the equations (b)-(e), has a unique entire solution provided $u_{0}(x,y)\in X_{1,1}',$ resp. $(u_{0}(x,y,z),u_{1}(x,y,z)) \in X_{1,2}'$ (similar conclusions can be established for the wellposedness of the equations \cite[(5), (6); Section 4]{fala-prim-0} in $L^{p}(\Omega)$, with $\tau >0,$ $\lambda_{2}> 0$ and $\lambda_{3}> 0$ and $\emptyset \neq \Omega \subseteq {\mathbb R}^{n}$ being an open bounded domain with smooth boundary, which are important in the study of fluctuations of thermoelastic plates and non-stationary processes in thermal fields; the uniqueness of entire solutions of the equation \cite[(5)]{fala-prim-0}, resp. \cite[(6)]{fala-prim-0}, in the case that $\lambda_{2}<0,$ resp. $\lambda_{3}<0,$ cannot be proved here by using the injectivity of associated polynomial matrix operator $\overline{P_{2}(A)},$ and we will only refer the reader to the assertions of \cite[Theorem 4]{fala-prim-0} and \cite[Theorem 2.2]{publi} for further information in this direction). It should be finally noted that we do not know, in the present situation, whether 
the subspace  $X_{1,1}',$ resp. $X_{1,2}',$ of initial values $\vec{x}=u_{0},$ resp. $\vec{x}=(u_{0},u_{1}),$ for which there exists a unique entire solution $t\mapsto \vec{u}(t),$ $t\geq 0$ of the equation (a), resp. any of the equations (b)-(e), can be chosen to be dense in $L^{p}({\mathbb R}^{2}),$ resp. $L^{p}({\mathbb R}^{3}) \times L^{p}({\mathbb R}^{3}).$

{\begin{center}
{\sc ACKNOWLEDGEMENTS}
\end{center}

\ \ \ This research was supported in part by grant 174024 of Ministry
of Science and Technological Development, Republic of Serbia.

\end{document}